\newtheorem{theorem}{Theorem}[section]
\newtheorem{proposition}[theorem]{Proposition}
\newtheorem{lemma}[theorem]{Lemma}
\newtheorem{corollary}[theorem]{Corollary}
\newtheorem{question}[theorem]{Question}
\theoremstyle{definition}
\newtheorem{definition}[theorem]{Definition}
\newtheorem{remark}[theorem]{Remark}
\numberwithin{equation}{section}
\begin{document}

\baselineskip=15.5pt

\title[Fujiki class $\mathcal C$ and geometric structures]{Fujiki class $\mathcal C$ 
and holomorphic geometric structures}

\author[I. Biswas]{Indranil Biswas}

\address{School of Mathematics, Tata Institute of Fundamental
Research, Homi Bhabha Road, Mumbai 400005, India}

\email{indranil@math.tifr.res.in}

\author[S. Dumitrescu]{Sorin Dumitrescu}

\address{Universit\'e C\^ote d'Azur, CNRS, LJAD, France}

\email{dumitres@unice.fr}

\subjclass[2010]{53B35, 53C55, 53A55}

\keywords{Holomorphic geometric structure; Fujiki class $\mathcal C$; algebraic 
reduction; torus bundle.}

\date{}

\begin{abstract}
For compact complex manifolds with vanishing first Chern class that are compact torus principal bundles over K\"ahler manifolds, we prove that
all holomorphic geometric structures on them, of affine type, are locally homogeneous.
For a compact simply connected complex manifold in Fujiki class $\mathcal C$,
whose dimension is strictly larger than the algebraic dimension, we prove that it does
not admit any holomorphic rigid geometric structure, and also it
does not admit any holomorphic Cartan geometry 
of algebraic type. We prove that compact complex simply connected manifolds in 
Fujiki class $\mathcal {C}$ and with vanishing first Chern class do not admit any holomorphic Cartan geometry of algebraic type.
\end{abstract}

\maketitle

\section{Introduction}

The article deals, in particular, with holomorphic geometric structures, in the sense of
Gromov \cite{Gr, DG}, on compact 
complex manifolds. The definition is very general (see Section \ref{section: geometric 
structures}), and interesting classical examples of such structures
to keep in mind are holomorphic tensors, 
holomorphic affine connections, holomorphic projective connections and holomorphic conformal 
structures. Compact complex manifolds bearing any of these kind of geometric structures are 
rather special. Based on results obtained in \cite{D3, BD} we formulate the following:

\begin{question}\label{q1}
Is it true that any holomorphic geometric structure of affine type on 
any compact complex manifold with trivial canonical bundle is locally homogeneous?
\end{question}

Question \ref{q1} is known to have a positive answer if the manifold is either K\"ahler (hence Calabi-Yau) \cite{D3}, or 
if it is in Fujiki class ${\mathcal C}$ with a polystable holomorphic tangent bundle (with respect to some Gauduchon 
metric) \cite{BD}. The results of \cite{D3} show that the answer is also yes for compact parallelizable manifolds and for 
Ghys's deformations of the complex structure on the parallelizable manifolds ${\rm SL(2, \mathbb C)} / \Gamma$ with 
$\Gamma$ being a uniform lattice \cite{Gh2}.

Here we prove that Question \ref{q1} has a positive answer for compact complex torus 
principal bundles over compact K\"ahler Calabi-Yau manifolds (Theorem \ref{fibration}).

Since compact complex surfaces with trivial canonical bundle are either complex tori, or 
K3 surfaces, or primary Kodaira surfaces (elliptic principal bundles over elliptic curves) 
\cite[Chapter~6]{BHPV}, from Theorem \ref{fibration} it follows that the answer to
Question \ref{q1} is yes when the dimension of the manifold is two.

More precisely, our result in this direction is:

\begin{theorem}\label{fibration}
Let $X$ be a compact complex torus holomorphic principal bundle over a compact K\"ahler 
manifold with trivial first Chern class in $H^2(X,\, \mathbb R)$. Then any holomorphic 
geometric structure of affine type $\phi$ on $X$ is locally homogeneous.
\end{theorem}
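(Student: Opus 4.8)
**The plan is to exploit the torus principal bundle structure together with the Chern class vanishing to produce enough flat holomorphic structure on the fibers, and then push the geometric structure down / lift it up using the algebraic reduction machinery.**

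The plan is to show that the sheaf of local holomorphic Killing fields of $\phi$ is transitive, i.e. spans $T_X$ at every point, which is equivalent to local homogeneity. Write $\pi\colon X\to M$ for the given principal bundle, with structure group a compact complex torus $T$ of dimension $d$, and $M$ a compact Kähler Calabi--Yau manifold. The starting point is that the relative tangent bundle $T_{X/M}$ is holomorphically trivial, being trivialised by the infinitesimal $T$-action: this produces $d$ pointwise linearly independent, pairwise commuting holomorphic vector fields $\xi_1,\dots,\xi_d$ on $X$ spanning the distribution tangent to the fibres of $\pi$ (a holomorphic foliation $\mathcal F$), and it gives $K_X\cong\pi^*K_M$, consistent with the hypothesis $c_1(X)=0$. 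The strategy is to control homogeneity in the fibre directions directly and transfer the transverse directions to the Kähler Calabi--Yau base, where Question~\ref{q1} is known to have a positive answer \cite{D3}.

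First I would combine Gromov's theory (in the holomorphic affine-type setting \cite{Gr, DG}, as developed by Dumitrescu) with the fibration. Gromov's open--dense argument gives a dense open subset $U\subseteq X$, with $X\setminus U$ a proper analytic subset, on which $\phi$ is locally homogeneous. Independently, restricting $\phi$ to any fibre $T_m=\pi^{-1}(m)$ yields a holomorphic geometric structure of affine type on the compact complex torus $T_m$; pulling back to the universal cover $\mathbb C^d$ it becomes a $\Lambda$-periodic holomorphic map to the affine model variety, hence constant, so $\phi|_{T_m}$ is translation invariant. In particular each $\xi_i$ is a Killing field of $\phi|_{T_m}$, so $\phi$ is ``locally homogeneous along $\mathcal F$'': only homogeneity transverse to the fibres remains.

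For the transverse directions I would first make a finite étale base change along a Beauville--Bogomolov cover of $M$ (harmless, since local homogeneity is étale-local and the new total space still has vanishing first Chern class), reducing to the case where $M$ is a product of a complex torus and irreducible Calabi--Yau and hyperkähler factors. Then the plan is to encode the transverse behaviour of $\phi$ as a holomorphic geometric structure $\overline\phi$ of affine type on $M$: using the fibrewise translation invariance from the previous step together with $c_1(X)=0$ to split, after passing to the relevant prolongations, the extension $0\to\mathfrak t\otimes\mathcal O_X\to T_X\to\pi^*T_M\to 0$ to the order needed, one extracts from $\phi$ a well-defined structure $\overline\phi$ on $M$ whose local Killing fields are exactly the push-forwards of the local $\phi$-Killing fields commuting with all the $\xi_i$. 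Since Question~\ref{q1} holds for compact Kähler Calabi--Yau manifolds \cite{D3}, the structure $\overline\phi$ is locally homogeneous, so its Killing algebra is transitive on $M$; lifting these Killing fields back to $X$ (again using the vanishing making the extension split to the relevant order) and adjoining the fibre translations $\xi_1,\dots,\xi_d$ gives a Killing algebra of $\phi$ transitive on all of $X$, hence local homogeneity.

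The main obstacle is precisely the construction of $\overline\phi$ and the lifting of Killing fields: $\phi$ does not literally descend, since $\pi$ is in general topologically nontrivial and $X$ need not be Kähler (primary Kodaira surfaces are the model case), so the ``descent'' must be carried out at the level of Gromov's higher-order frame bundles and their prolongations, the role of $c_1(X)=0$ being to ensure that the obstructions to splitting off the fibre directions vanish to all intervening orders — equivalently, that the holomorphic curvature-type invariants of $\phi$ measuring the failure of transverse homogeneity are constant. An alternative route for this step, in the spirit of \cite{BD}, is metric: produce a Gauduchon metric on $X$ adapted to the fibration and with vanishing Chern--Ricci form (using $c_1(X)=0$), deduce by a Bochner argument that $\phi$ is parallel for the corresponding Chern connection, and conclude by a Singer/Nomizu-type criterion; here the delicate point is the Bochner vanishing for holomorphic tensors on $X$, where $T_X$ fails to be polystable.
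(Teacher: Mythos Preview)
Your vertical-direction claim---that the $\xi_i$ are Killing fields of $\phi$---is correct, though the phrasing via ``restricting $\phi$ to a fibre'' is not well-posed (a geometric structure on $X$ does not restrict to one on a submanifold without further data). The direct argument is that for each frame $e\in R^k(X)$ the map $T\to Z$, $t\mapsto\phi(t\cdot e)$, is holomorphic from a compact torus to an affine variety, hence constant; so $\phi$ is $T$-invariant.

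The genuine gap is the transverse step. You propose to split the extension $0\to\mathfrak t\otimes\mathcal O_X\to T_X\to\pi^*T_M\to 0$ (and its prolongations) and attribute this to $c_1(X)=0$. But that extension does \emph{not} split holomorphically in general: for a primary Kodaira surface its class is precisely the nontrivial Atiyah class of the elliptic principal bundle, and a holomorphic splitting would make the surface a product, hence K\"ahler. So there is no candidate for $\overline\phi$ on $M$, and no mechanism to lift local Killing fields of such a $\overline\phi$ back to Killing fields of $\phi$. Your alternative metric route hits the obstacle you yourself name (non-polystability of $T_X$), so neither branch closes.

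The paper avoids descending the structure altogether. It invokes the criterion of \cite[Lemma~3.2]{D3}: if $\phi$ is not locally homogeneous then some nontrivial holomorphic section of $(TX)^{\otimes a}\otimes(T^*X)^{\otimes b}$ vanishes at a point. Using triviality of $K_X$ this is converted to a nontrivial $t\in H^0(X,(T^*X)^{\otimes m})$ with a zero at $x_0$. An induction on $m$ shows that every contraction of $t$ with the $\xi_i$ vanishes identically (each contraction has strictly smaller degree and still vanishes at $x_0$), so $t$ is actually a section of $\pi^*((T^*M)^{\otimes m})$ and descends to $M$; there the Bochner principle on the K\"ahler Calabi--Yau base forces it to be parallel, contradicting the zero. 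What descends is not $\phi$ but the tensor witnessing its failure of local homogeneity---and that descent requires no splitting, only the contraction with the global vertical fields.
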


The previous result also stands for holomorphic projective connections and for holomorphic conformal structures even 
though these two geometric structures are not of affine type. This is because on manifolds with trivial canonical bundle 
these two geometric structures admit global representatives which are of affine type, namely, a holomorphic affine
connection and a holomorphic Riemannian metric respectively.

On the other hand, the previous result does not work in general for non-affine geometric 
structures. To give an example, recall that compact complex tori $T^n\,=\,\mathbb C^n / \Lambda$ (with 
$\Lambda$ a cocompact lattice) admit holomorphic foliations that are defined by nonconstant holomorphic 
maps into the complex projective space ${\mathbb C}P^{n-1}$ which are not translation 
invariant \cite{Gh}. Together with the standard holomorphic parallelization of the 
holomorphic tangent bundle of $T^n$ they form holomorphic {\it rigid} geometric 
structures of non-affine type in Gromov's sense (see Definition \ref{def-a} and
Definition \ref{d-rigid}) which are not locally homogeneous.

We expect the answer for Question \ref{q1} to be yes for all holomorphic geometric 
structures of affine type on compact complex manifolds in the Fujiki class $\mathcal 
C$ that have trivial canonical bundle. We also expect holomorphic Riemannian metrics to 
be always locally homogeneous on compact complex manifolds (this was proved in complex 
dimension three \cite{D4}) and holomorphic affine connections to be always locally 
homogeneous on compact complex manifolds with trivial canonical bundle; it may be noted that contrary to 
the case of holomorphic Riemannian metrics, here the condition on the triviality of 
the canonical bundle is not automatically satisfied and is in fact even necessary: there exists 
non-locally homogeneous holomorphic affine connections on principal elliptic bundles 
with odd first Betti number (hence non-K\"ahler) over Riemann surfaces of genus $g 
\,\geq\, 2$ \cite{D5}.

In the case where the holomorphic geometric structure $\phi$ is rigid, Theorem 
\ref{fibration} enables us to gather information about the fundamental group of the manifold 
$X$:

\begin{corollary}\label{main corollary}
Let $X$ be a compact complex torus holomorphic 
principal bundle over a compact K\"ahler manifold with trivial first Chern class. If
$X$ is endowed with a holomorphic rigid geometric structure of affine type $\phi$, 
then the fundamental group of $X$ is infinite.
\end{corollary}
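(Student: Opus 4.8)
The plan is to argue by contradiction: I assume that $\pi_1(X)$ is finite and deduce that $X$ is a point.

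First I would reduce to the simply connected case. If $\pi_1(X)$ is finite, the universal covering $\varpi\colon\widetilde X\to X$ is a finite holomorphic covering, so $\widetilde X$ is a compact complex manifold; one checks that $\widetilde X$ is again a compact complex torus principal bundle over a compact K\"ahler manifold, that $c_1(\widetilde X)=\varpi^*c_1(X)=0$ in $H^2(\widetilde X,\mathbb R)$, and that $\widetilde\phi:=\varpi^*\phi$ is a holomorphic rigid geometric structure of affine type. By Theorem~\ref{fibration}, $\widetilde\phi$ is locally homogeneous. Now I would invoke the extension principle for Killing fields of a rigid geometric structure on a simply connected manifold (the holomorphic analogue of the Nomizu--Amores--Gromov theorem): since $\widetilde X$ is simply connected, every local holomorphic Killing field of $\widetilde\phi$ extends to a global holomorphic vector field on $\widetilde X$, and such fields are complete because $\widetilde X$ is compact. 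They therefore integrate to a holomorphic action of the complex Lie group $G=\mathrm{Aut}^0(\widetilde X)$, and by local homogeneity their values span $T_x\widetilde X$ at every point, so the action is transitive and $\widetilde X\cong G/H$ is a compact complex homogeneous manifold. Rigidity of $\widetilde\phi$, together with its being of affine type, is precisely what makes the isotropy representation $\mathfrak h\to\mathfrak{gl}(\mathfrak g/\mathfrak h)$ a subalgebra of finite type in the sense of Cartan prolongation.

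The decisive step -- which I expect to be the main obstacle -- is to show that such a $G/H$ must be a point. I would route this through the Tits fibration $\rho\colon G/H\to G/N$ of a compact complex homogeneous manifold, whose base $G/N$ is a rational homogeneous (flag) manifold and whose fibre is compact complex parallelizable; since $\widetilde X$ is simply connected the fibre is in fact a complex torus. The heart of the matter is that a flag manifold of positive dimension carries no rigid geometric structure of affine type -- its isotropy representation involves a standard general-linear action together with the nilpotent action of a unipotent radical, both of infinite Cartan type -- so the finite-type condition on the isotropy of $G/H$ is incompatible with a nontrivial base $G/N$. Hence $G/N$ is a point and $\widetilde X=G/H$ is itself complex parallelizable; by Wang's theorem $\widetilde X=\Lambda\backslash S$ for a simply connected complex Lie group $S$ and a cocompact lattice $\Lambda\subset S$, and simple connectivity of $\widetilde X$ forces $\Lambda=\{e\}$, so $\widetilde X=S$ is a compact complex Lie group, hence a complex torus, hence -- being simply connected -- a point. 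This contradicts $\dim_{\mathbb C}X\geq 1$, so $\pi_1(X)$ is infinite.

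The places where genuine care is required are the interaction between Cartan finite-type isotropy and the structure theory of compact complex homogeneous spaces, and the bookkeeping in the reduction to the simply connected case. The hypotheses that the bundle $X\to Y$ is \emph{principal}, that $Y$ is \emph{K\"ahler}, and that $c_1(X)=0$ enter to exclude the non-parallelizable homogeneous models; for instance, once one knows (using that the base is K\"ahler) that $\rho$, or the torus fibration $\widetilde X\to\widetilde Y$, is $G$-equivariant, the Borel--Remmert theorem applied to the base, together with $c_1=0$ and simple connectivity, gives an alternative route to the same conclusion.
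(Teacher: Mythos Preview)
Your broad strategy coincides with the paper's: apply Theorem~\ref{fibration} to get local homogeneity, pass to the simply connected cover, use the Nomizu--Amores--Gromov extension principle to make the Killing algebra act transitively, conclude that $\widetilde X$ is compact complex homogeneous, and then show it must be parallelizable (hence a quotient of a complex Lie group by a cocompact lattice, contradicting simple connectivity). The differences are in the details of two steps.

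First, a minor point of ordering. The paper applies Theorem~\ref{fibration} to $X$ itself and only afterwards passes to the universal cover; local homogeneity lifts for free. You instead pass to $\widetilde X$ first and then want to apply Theorem~\ref{fibration}, which obliges you to verify that $\widetilde X$ is again a torus principal bundle over a compact K\"ahler manifold with $c_1=0$. This is true but is an extra verification you need not incur.

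Second, and more substantively, your route from ``$\widetilde X$ is homogeneous'' to ``$\widetilde X$ is parallelizable'' is much harder than necessary and contains a gap. You argue that rigidity and affine type force the isotropy $\mathfrak h\subset\mathfrak{gl}(\mathfrak g/\mathfrak h)$ to be of finite Cartan type, and then that this is incompatible with a nontrivial Tits base $G/N$ because ``a flag manifold of positive dimension carries no rigid geometric structure of affine type.'' But the rigid affine structure lives on $G/H$, not on $G/N$; you have not explained how the finite-type condition on the isotropy of $G/H$ constrains the isotropy of $G/N$, nor why the structure should descend or restrict in a way that yields a contradiction. The paper bypasses all of this by using the hypothesis you have not yet spent: since $c_1(X)=0$ and the base is K\"ahler, $K_X$ is of finite order (Beauville, Tosatti), so on the simply connected cover $K_{\widetilde X}$ is \emph{trivial}. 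A compact complex homogeneous manifold with trivial canonical bundle is parallelizable (this is classical; the paper cites Wang \cite{Wa}), and the contradiction follows at once. Your alternative suggestion via Borel--Remmert is closer in spirit, but the direct ``homogeneous $+$ $K$ trivial $\Rightarrow$ parallelizable'' step is the clean way through.
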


\begin{corollary}\label{second corollary}
Let $X$ be a compact complex manifold in 
Fujiki class $\mathcal C $ bearing a holomorphic affine connection in $TX$. Then the 
fundamental group of $X$ is infinite.
\end{corollary}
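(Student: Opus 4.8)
The plan is to deduce Corollary~\ref{second corollary} from Corollary~\ref{main corollary} by reducing the Fujiki class $\mathcal C$ case to the torus-bundle situation via the algebraic reduction, after first disposing of the structure-theoretic input about the canonical bundle. First I would recall that a compact complex manifold $X$ in Fujiki class $\mathcal C$ equipped with a holomorphic affine connection on $TX$ has all its rational Chern classes vanishing (the affine connection makes the Chern forms of any compatible Hermitian structure cohomologically trivial, and in Fujiki class $\mathcal C$ these live in a genuine Hodge-theoretic cohomology so the classes themselves vanish); in particular $c_1(X)\,=\,0$ in $H^2(X,\,\mathbb Q)$, and since $X$ is Moishezon-like enough (Fujiki class $\mathcal C$) a suitable multiple of $K_X$ is trivial, or at least $K_X$ is numerically trivial. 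The key geometric point is then that a compact manifold in Fujiki class $\mathcal C$ with numerically trivial canonical bundle and a holomorphic affine connection should, up to a finite \'etale cover, be a compact complex torus principal bundle over a compact K\"ahler Calabi--Yau manifold — this is precisely the class of manifolds covered by Theorem~\ref{fibration} and Corollary~\ref{main corollary}.

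Concretely, the steps I would carry out are as follows. Step one: argue that the algebraic dimension $a(X)$ and the existence of a holomorphic affine connection force the algebraic reduction $f\colon X \dashrightarrow Y$ onto a projective manifold $Y$ to be an honest holomorphic submersion (no singular fibres, no indeterminacy), using that holomorphic affine connections cannot survive on the generic fibre unless the fibration is very special; the fibres are themselves compact complex manifolds carrying a holomorphic affine connection with trivial canonical bundle and zero algebraic dimension, hence — invoking the expected/known structure of such fibres — compact complex tori. Step two: promote this to the statement that $X$ is, after a finite cover, a principal torus bundle over the projective (a fortiori K\"ahler) base $Y$; here one uses that the fibrewise tori fit together into a torus bundle because the affine connection rigidifies the transition data, and that $Y$ inherits triviality of $c_1$ from $c_1(X)\,=\,0$ together with the fibration being a submersion with torus fibres, so $Y$ is Calabi--Yau K\"ahler. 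Step three: apply Corollary~\ref{main corollary} to this finite cover $\widetilde X$ to conclude $\pi_1(\widetilde X)$ is infinite; since $\pi_1(\widetilde X)$ has finite index in $\pi_1(X)$, the latter is infinite as well, which is the assertion.

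The main obstacle I expect is Step one combined with Step two: showing that the algebraic reduction of a Fujiki-class-$\mathcal C$ manifold with a holomorphic affine connection is a smooth principal torus bundle over the K\"ahler base. The honest submersivity and the torus-fibre structure require controlling what a holomorphic affine connection does to a fibration — one must rule out singular fibres, rule out positive-dimensional non-torus fibres, and then see that the family of fibres is isotrivial enough to be a principal bundle. In a paper of this sort this is presumably the content of a separate structural lemma (perhaps drawing on \cite{BD} for the splitting of $TX$ along the fibration and on properties of the Albanese/algebraic reduction), and I would isolate it as such before feeding the outcome into Corollary~\ref{main corollary}. Once that structural reduction is in hand, the deduction of the infinitude of $\pi_1(X)$ is immediate from Theorem~\ref{fibration} and Corollary~\ref{main corollary} by passing to a finite \'etale cover.
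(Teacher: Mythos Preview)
Your opening move---vanishing of the real Chern classes via the two Chern--Weil representatives and the Hodge decomposition in class~$\mathcal C$, hence torsion $K_X$---matches the paper exactly. After that, however, the paper does \emph{not} attempt to exhibit $X$ (or a finite cover) as a torus principal bundle over a K\"ahler Calabi--Yau and feed it into Corollary~\ref{main corollary}. Instead it argues by contradiction: pass to the simply connected universal cover (still class~$\mathcal C$, now $K_X$ trivial), and split into two cases. If $a(X)<n$, Proposition~\ref{dim less} gives the contradiction directly; that proposition is proved not by constructing a torus bundle but by invoking Theorem~\ref{act alg red} to get an abelian group $L'$ acting with generic orbits equal to the fibres of $\pi_{red}$, using the volume form (Lemma~\ref{vol}) plus Gromov's finite-measure argument to force all $L'$-orbits compact, then \cite{GW} to factor the action through a compact torus, and finally the Fujiki/Albanese argument (Proposition~6.9 of \cite{Fu}) that on a simply connected class~$\mathcal C$ manifold the fundamental vector fields must have a common zero on every invariant torus---a contradiction. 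If $a(X)=n$, one uses \cite{BM2} (Moishezon $+$ Cartan geometry $\Rightarrow$ projective) and \cite{IKO} (projective $+$ affine connection $\Rightarrow$ torus-covered), again a contradiction.

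The structural lemma you are hoping for---that the algebraic reduction of a class~$\mathcal C$ manifold with a holomorphic affine connection is, up to finite cover, a smooth principal torus bundle over a K\"ahler Calabi--Yau---is nowhere in the paper, and your Steps~1 and~2 are genuine gaps: you give no mechanism for ruling out indeterminacy of $\pi_{red}$, no argument that the generic fibre (which a priori only has algebraic dimension zero and an induced affine connection) must be a torus, and no argument promoting a torus fibration to a \emph{principal} one. These are serious issues; for instance, classifying compact non-K\"ahler manifolds of algebraic dimension zero with a holomorphic affine connection is itself open in general. The paper sidesteps all of this by working dynamically with the Killing symmetries rather than geometrically with the fibration, so your proposed route, while plausible-sounding, is not the one taken and is not supported by the available lemmas.
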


Notice that Theorem \ref{fibration} applies to many complex torus principal bundles which are not K\"ahler. For example, 
one could consider the complex Heisenberg group of upper triangular $(3 \times 3)$ matrices with complex entries and 
the compact parallelizable manifold obtained by taking the 
quotient of this group by the lattice of matrices with Gaussian integers 
as entries. This quotient is biholomorphic to a (non-K\"ahler) principal elliptic bundle over a two-dimensional compact 
complex torus. Many other examples and results about complex torus principal bundles can be found in \cite{Hof}.

Recall that complex projective spaces admit the standard flat holomorphic projective 
connection. Also recall that the smooth quadric \(z_0^2+ z_1 ^2 + \ldots 
+z_{n+1}^2=0\) in ${\mathbb C}P^{n+1}$ is endowed with a canonical (flat) holomorphic 
conformal structure given by the quadratic form (this standard holomorphic conformal 
structure is invariant by the subgroup ${\rm PO}(n+2, \mathbb{C})$ of the complex 
projective group $\text{PGL}(n+2,{\mathbb C})$). More generally, if $G$ is a complex 
semi-simple Lie group and $P\, \subset\, G$ a parabolic subgroup, then the rational manifold $G/P$ 
is equipped with the standard flat Cartan geometry with model $(G,P)$.

For holomorphic rigid geometric structures and for holomorphic Cartan geometries of algebraic 
type (definitions are given in Section \ref{section: geometric structures}) we prove the 
following:

\medskip
\noindent
\text{Theorem \ref{dim less 2}. }~ {\it Let $X$ be a compact complex manifold in the Fujiki 
class $\mathcal{C}$, of complex dimension $n$ and of algebraic dimension $n-d$, with $d>0$. 
If $X$ admits a holomorphic rigid geometric structure $\phi$, then the fundamental group of 
$X$ is infinite.}

\medskip
\noindent
\text{Theorem \ref{Cartan geom}. }~ {\it Let $X$ be a compact complex simply connected manifold in the
Fujiki class $\mathcal{C}$. If $X$ bears a holomorphic Cartan geometry of algebraic type, then $X$ is projective.}
\medskip

Theorem \ref{dim less 2} and Theorem \ref{Cartan geom} are in the same spirit as Borel-Remmert result asserting that 
compact simply connected homogeneous manifolds in Fujiki class $\mathcal C$ are 
projective. Indeed, it should be mentioned that it was conjectured that compact 
simply connected complex manifolds bearing holomorphic Cartan geometries are 
homogeneous manifolds. Borel-Remmert theorem states precisely that compact homogeneous 
manifolds in class $\mathcal C$ are biholomorphic to a product of a projective 
rational homogeneous manifold with a complex torus \cite{Fu} (p. 255).

In the special case where the algebraic dimension of $X$ is zero, Theorem \ref{Cartan geom} 
asserts that $X$ does not bear holomorphic Cartan geometries of algebraic type. This was 
proved recently in \cite{BDM} (Theorem 4.1) even for manifolds which are not necessarily in 
class $\mathcal C$.

Theorem \ref{Cartan geom} implies the following:

\begin{corollary}\label{third corollary}
Let $X$ be a compact complex manifold in 
Fujiki class $\mathcal C $ with trivial first Chern class in 
$H^2(X,\, \mathbb R)$. If $X$ bears a holomorphic Cartan geometry of algebraic type, then the 
fundamental group of $X$ is infinite.
\end{corollary}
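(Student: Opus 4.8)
The plan is to reduce to the simply connected situation and then quote Theorem \ref{Cartan geom}. Suppose, towards a contradiction, that $\pi_1(X)$ is finite, and let $p\colon \widehat X\,\longrightarrow\, X$ be the universal covering; it is a finite \'etale holomorphic map, so $\widehat X$ is a compact complex manifold, now simply connected. I would first check that the three hypotheses descend to $\widehat X$. For Fujiki class $\mathcal C$: if $\mu\colon Z\,\longrightarrow\, X$ is a proper modification with $Z$ compact K\"ahler, then $\widehat Z\,:=\,Z\times_X\widehat X$ is a finite \'etale cover of $Z$, hence compact K\"ahler, and it admits a proper modification onto $\widehat X$, so $\widehat X$ is in class $\mathcal C$. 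Next, $c_1(\widehat X)\,=\,p^\ast c_1(X)\,=\,0$ in $H^2(\widehat X,\,\mathbb R)$. Finally, the pullback of the holomorphic Cartan geometry is a holomorphic Cartan geometry on $\widehat X$ with the same model $(G,\,H)$, hence again of algebraic type. Theorem \ref{Cartan geom} now gives that $\widehat X$ is projective.

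So $\widehat X$ is a smooth projective simply connected manifold with $c_1(\widehat X)\,=\,0$ in real cohomology. Simple connectivity forces $H^1(\widehat X,\,\mathcal O_{\widehat X})\,=\,0$ and $\mathrm{Pic}(\widehat X)$ torsion free (a nontrivial torsion line bundle would produce a nontrivial connected finite cyclic \'etale cover), so $K_{\widehat X}$ is holomorphically trivial and $\widehat X$ is a projective Calabi--Yau manifold; by Yau's theorem it carries a Ricci-flat K\"ahler metric $\omega$, with respect to which $T\widehat X$ is polystable and $c_1(T\widehat X)\,=\,0$. Set $n\,=\,\dim_{\mathbb C}\widehat X\,\geq\, 1$. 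It remains to show that such an $\widehat X$ cannot carry a holomorphic Cartan geometry of algebraic type.

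For this I would use the standard fact that the Cartan connection extends to a holomorphic principal connection on the associated principal $G$-bundle $E_G\,=\,E_H\times_H G$; since the model is of algebraic type, $G$ is a linear algebraic group, so the adjoint bundle $\mathrm{ad}(E_G)$ carries a holomorphic connection and therefore has vanishing Chern classes in $H^\bullet(\widehat X,\,\mathbb C)$. Plugging this into the exact sequence $0\,\longrightarrow\,\mathrm{ad}(E_H)\,\longrightarrow\,\mathrm{ad}(E_G)\,\longrightarrow\, T\widehat X\,\longrightarrow\, 0$ associated with the Cartan geometry and using $c_1(T\widehat X)\,=\,0$, the plan is to conclude that $\int_{\widehat X}c_2(T\widehat X)\wedge\omega^{n-2}\,=\,0$. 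Once this is known, the Bogomolov--L\"ubke inequality for the $\omega$-polystable bundle $T\widehat X$ with $c_1\,=\,0$ forces $(T\widehat X,\,\omega)$ to be flat, so $\widehat X$ is a compact flat K\"ahler manifold, hence finitely covered by a complex torus and in particular with infinite fundamental group --- contradicting $\pi_1(\widehat X)\,=\,1$ (recall $n\,\geq\, 1$). This contradiction shows $\pi_1(X)$ is infinite, proving Corollary \ref{third corollary}.

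The step I expect to be the real obstacle is the vanishing $\int_{\widehat X}c_2(T\widehat X)\wedge\omega^{n-2}\,=\,0$. The Whitney formula applied to the above sequence only yields $c_2(T\widehat X)\,=\,-c_2(\mathrm{ad}(E_H))$ once $c_1(T\widehat X)\,=\,0$ is used, and the Chern classes of $\mathrm{ad}(E_H)$ are not forced to vanish in general --- as the flag-variety models $(G,\,P)$ show, where $c_2$ of the tangent bundle is nonzero. Thus the Calabi--Yau hypothesis must enter essentially: I would represent the relevant Chern--Weil forms using the curvature of the Cartan connection --- a holomorphic, hence (on the Ricci-flat $\widehat X$) parallel, tensor field valued in a bundle associated with $E_H$ --- and argue that on a Calabi--Yau manifold of reduced holonomy the resulting four-form pairs trivially with $\omega^{n-2}$. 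Alternatively, one could try to first show that the Cartan geometry on $\widehat X$ must be flat, so that the developing map realises the compact $\widehat X$ as a finite cover of the model space $G/H$; since $G/H$ is then a homogeneous space of the algebraic group $G$ finitely covered by a Calabi--Yau manifold, its first Chern class cannot vanish unless it is a point, and either way a contradiction ensues. Turning either of these heuristics into a proof is where the work lies.
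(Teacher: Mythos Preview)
Your overall strategy coincides with the paper's: pass to the universal cover, apply Theorem~\ref{Cartan geom} to get a simply connected projective $\widehat X$, observe that $c_1(\widehat X)=0$ then forces $K_{\widehat X}$ trivial so that $\widehat X$ is projective Calabi--Yau, and seek a contradiction. Your verification that the class-$\mathcal C$ and Cartan-geometry hypotheses lift to $\widehat X$ is more explicit than the paper's, but entirely in the same spirit.

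The divergence is at the final step. The paper simply quotes \cite{BM1,D2}: a projective Calabi--Yau manifold carrying a holomorphic Cartan geometry of algebraic type is covered by a compact complex torus, yielding the contradiction immediately. You instead try to reprove this, aiming for $\int_{\widehat X} c_2(T\widehat X)\wedge\omega^{n-2}=0$ followed by Bogomolov--L\"ubke, and you rightly flag this as the genuine obstacle: Whitney only gives $c_2(T\widehat X)=-c_2(\mathrm{ad}(E_H))$, which need not vanish on its own (the flag-variety models you mention illustrate this). Neither of your two heuristics --- parallelism of Chern--Weil forms under reduced holonomy, or flatness of the Cartan geometry plus a developing-map argument --- is how \cite{BM1} actually closes the gap. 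The mechanism there is a semistability argument: on a compact K\"ahler Calabi--Yau, $T\widehat X$ is polystable of degree zero, and one shows that a polystable degree-zero quotient of a bundle admitting a holomorphic connection (here $\mathrm{ad}(E_G)$) itself admits a holomorphic connection; hence all Chern classes of $T\widehat X$ vanish and your Bogomolov--L\"ubke conclusion then goes through. (A side remark: ``algebraic type'' only asserts that $\mathrm{Ad}(H)\subset\mathrm{GL}(\mathfrak g)$ is algebraic, not that $G$ is a linear algebraic group; the holomorphic connection on $\mathrm{ad}(E_G)$ comes from extending the Cartan connection to a principal connection on $E_G$ regardless, so your use of it is fine.) In short, your plan is correct and matches the paper's, but the step you single out as unfinished is exactly the content of the references the paper cites, and the route taken there is different from the ones you sketch.
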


Notice that Corollary \ref{third corollary} should be seen as a natural generalization of 
Corollary \ref{second corollary}. Indeed, manifolds $X$ in Fujiki class $\mathcal C$ bearing 
holomorphic affine connections on $TX$ have vanishing Chern classes (see proof of Corollary 
\ref{second corollary} in Section \ref{section:trivial canonical bundle}).

\section{Geometric structures and symmetries} \label{section: geometric structures}

Let $X$ be a complex manifold of complex dimension $n$. For any integer $k \,\geq 
\,1$, we associate the principal bundle of $k$-frames $$R^k(X) \,\longrightarrow\, X\, ,$$ 
which is the bundle of $k$-jets of local holomorphic coordinates on $X$. The 
corresponding structural group $D^k$ is the group of $k$-jets of local biholomorphisms 
of $\mathbb{C}^n$ fixing the origin. We note that $D^k$ is a complex
affine algebraic group.

\begin{definition}\label{def-a}
A {\it holomorphic geometric structure} of order $k$
on $X$ is a holomorphic $D^k$-equivariant map $\phi$ from $R^k(X)$ to a
complex algebraic manifold $Z$ endowed with an algebraic action of $D^k$.
The geometric structure $\phi$ is said to be of affine type if $Z$ is a complex affine manifold.
\end{definition}

Holomorphic tensors are holomorphic geometric structures of affine type of order one. 
More precisely, a holomorphic tensor on $X$ is a holomorphic ${\rm GL}(n, \mathbb 
C)$-equivariant map from the frame bundle $R^1(X)$ to a linear complex algebraic 
representation $W$ of ${\rm GL}(n, \mathbb C)$. Holomorphic affine connections are 
holomorphic geometric structures of affine type of order two \cite{Gr,DG}. Holomorphic 
foliations and holomorphic projective connections are holomorphic geometric structure 
of non-affine type.

The natural notion of symmetry of a holomorphic geometric structure is the following. 
A (local) biholomorphism of $X$ preserves a holomorphic geometric structure $\phi$ if 
its canonical lift to $R^k(X)$ fixes each fiber of the map $\phi$. Such a local biholomorphism 
is called a {\it local isometry} of $\phi$.

A (local) holomorphic vector field on $X$ is called a {\it Killing vector field} with 
respect to $\phi$ if its local flow acts on $X$ by local isometries.

The connected 
component ${\rm Aut}_0(X, \phi)$, containing the identity element, of the automorphism group of $(X,\, \phi)$ is
a complex Lie subgroup of the automorphism group of $X$. The corresponding Lie algebra is the vector 
space of globally defined (holomorphic) Killing vector fields for $\phi$.

\begin{definition}\label{d-rigid}
A holomorphic geometric structure $\phi$ is called {\it rigid} of order $l$ in 
Gromov's sense if any local biholomorphism preserving $\phi$ is uniquely determined by its 
$l$-jet in any given point.
\end{definition}

Holomorphic affine connections are rigid of order one in Gromov's sense (see \cite{Gr} 
and the nice expository survey \cite{DG}). The rigidity comes from the fact that local 
biholomorphisms fixing a point and preserving a connection linearize in exponential 
coordinates, so they are indeed completely determined by their differential at the fixed 
point.

Holomorphic Riemannian metrics, holomorphic projective connections and holomorphic 
conformal structures in dimension $\geq 3$ are all rigid holomorphic geometric structures,
while holomorphic symplectic structures and holomorphic foliations are non-rigid geometric 
structures~\cite{DG}.

The sheaf of local Killing fields of a holomorphic rigid geometric structure $\phi$ is 
locally constant. Its fiber is a finite dimensional Lie algebra called {\it the 
Killing algebra} of $\phi$ \cite{DG,Gr}.

Gromov's study of local symmetries of analytic rigid geometric structure \cite{DG, Gr} 
led, in the particular case of simply connected manifolds $X$, to the following 
description of ${\rm Aut}_0(X, \phi)$ (see Section 3.5 in \cite{Gr}). The action of 
${\rm Aut}_0(X, \phi)$ preserves each connected component of the fibers of a meromorphic map
$$\phi^{r} \,:\, X \,
\longrightarrow\, W^r$$ into an algebraic manifold $W^r$ (representing the $r$-jets of $\phi$), and furthermore,
the action of ${\rm Aut}_0(X, \phi)$ on each of these connected components is transitive.
The two main ingredients of the proof are
\begin{enumerate}
\item the integrability result showing that, for any $r$ large 
enough, local isometries are exactly the class of local biholomorphisms that preserve the $r$-jet of $\phi$, and

\item the 
extendibility result proving that local Killing fields on simply connected manifolds 
extend to the entire manifold \cite{Am,DG,Gr, No}.
\end{enumerate}
It now follows that ${\rm Aut}_0(X, 
\phi)$-orbits in $X$ are locally closed as they coincide with the connected components 
of the fibers of the meromorphic map $\phi^{r}$.

Inspiration of these results led to Theorem 2.1 in \cite{D1} which 
says that the Killing Lie algebra of $\phi$ act transitively on the connected 
components of the {\it algebraic reduction} of $X$ (see also Theorem 3 in \cite{D3}).

The algebraic dimension $a(X)$ of $X$ is the transcendence degree over $\mathbb C$ of 
the field of meromorphic functions ${\mathcal M} (X)$.

Let us recall the following classical
result called the {\it algebraic reduction} theorem (see \cite[pp. 25--26]{Ue}). 

\begin{theorem}[{Algebraic Reduction, \cite[p.~25, Definition~3.3]{Ue}, \cite[p.~26, Proposition~3.4]{Ue}}]\label{thue}
Let $X$ be a compact connected complex manifold of dimension $n$ and algebraic dimension $a(X)\,=\,n-d$. There
exists a bi-meromorphic modification $$\Psi \,:\, \widetilde{X}\,\longrightarrow\, X$$ and a holomorphic map
$$t \,:\, \widetilde{X}\,\longrightarrow\, V$$ with connected fibers onto a $(n-d)$-dimensional projective manifold $V$ such
that $$t^* ({\mathcal M}(V))\,=\, \Psi^*({\mathcal M} (X))\, .$$
\end{theorem}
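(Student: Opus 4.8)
The plan is to reconstruct both $V$ and $t$ directly out of the field $\mathcal{M}(X)$ of meromorphic functions, so the first task is to understand this field abstractly. I would begin by invoking the finiteness theorem of Siegel--Thimm--Remmert: on a compact connected complex manifold, $\mathcal{M}(X)$ is an \emph{algebraic function field} over $\mathbb{C}$, meaning it is finitely generated with transcendence degree equal to $a(X)\,=\,n-d$. By the classical dictionary between algebraic function fields and projective varieties, there is then a smooth projective variety $V_0$ of dimension $n-d$ together with a $\mathbb{C}$-algebra isomorphism $\mathbb{C}(V_0)\,\cong\,\mathcal{M}(X)$, where $\mathbb{C}(V_0)\,=\,\mathcal{M}(V_0)$ because $V_0$ is projective.

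Next I would geometrize this field isomorphism. Choosing finitely many generators $f_1,\,\ldots,\,f_m$ of $\mathcal{M}(X)$ and writing each as a quotient of holomorphic objects, one obtains a meromorphic map $f\,:\,X\,\dashrightarrow\,\mathbb{P}^m$ whose image closure is a projective variety birational to $V_0$; the induced dominant meromorphic map $X\,\dashrightarrow\,V_0$ realizes the chosen isomorphism on function fields. This map is holomorphic off its indeterminacy locus, an analytic subset of codimension at least two.

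To upgrade it to a genuine holomorphic map, I would eliminate the points of indeterminacy. By Hironaka's elimination of indeterminacy, there is a proper modification $\Psi\,:\,\widetilde{X}\,\longrightarrow\,X$, realized as a finite composition of blow-ups along smooth centers, such that $f\circ\Psi$ extends to a holomorphic map $\widetilde{t}\,:\,\widetilde{X}\,\longrightarrow\,V_0$. Since $\Psi$ is bimeromorphic, $\Psi^*\,:\,\mathcal{M}(X)\,\longrightarrow\,\mathcal{M}(\widetilde{X})$ is an isomorphism, and $\widetilde{t}^*\mathcal{M}(V_0)$ lands inside $\Psi^*\mathcal{M}(X)$ with the same transcendence degree $n-d$. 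To secure connected fibers I would then pass to the Stein factorization $\widetilde{X}\,\longrightarrow\,\widehat{V}\,\longrightarrow\,V_0$ of $\widetilde{t}$, in which the first map has connected fibers and the second is finite; replacing $V_0$ by a smooth projective model $V$ of the normalization $\widehat{V}$, and blowing up $\widetilde{X}$ once more if needed to keep the map holomorphic, yields a surjective holomorphic map $t\,:\,\widetilde{X}\,\longrightarrow\,V$ with connected fibers onto a projective manifold of dimension $n-d$.

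Finally I would verify the identity $t^*(\mathcal{M}(V))\,=\,\Psi^*(\mathcal{M}(X))$. The inclusion $t^*\mathcal{M}(V)\,\subseteq\,\Psi^*\mathcal{M}(X)$ holds by construction. For the reverse inclusion, surjectivity of $t$ together with connectedness of the generic fiber forces $t^*\mathbb{C}(V)$ to be algebraically closed in $\mathcal{M}(\widetilde{X})$: a meromorphic function constant on the general fiber descends to a rational function on $V$, and a function satisfying an algebraic relation over $t^*\mathbb{C}(V)$ must be constant along connected fibers. Since $\Psi^*\mathcal{M}(X)$ has transcendence degree $n-d$ over $\mathbb{C}$ and already contains $t^*\mathbb{C}(V)$ of that same transcendence degree, the extension collapses and equality follows. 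The hard part will be the genuinely analytic input at the very first step, namely the Siegel--Remmert finiteness theorem; the most delicate bookkeeping is the last step, where one must arrange, through the Stein factorization, that the base $V$ carries \emph{exactly} the meromorphic functions of $X$ --- none fewer, by surjectivity, and none more, by connectedness of the generic fiber.
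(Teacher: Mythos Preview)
The paper does not prove this theorem; it is quoted as a classical result with a citation to Ueno's book, so there is no in-paper argument to compare against. Your outline is essentially the standard proof one finds in Ueno: Siegel--Remmert finiteness to realize $\mathcal{M}(X)$ as an algebraic function field, a projective model $V_0$ of that field, a meromorphic map $X\dashrightarrow V_0$ inducing the field isomorphism, Hironaka to resolve indeterminacy, and Stein factorization to force connected fibers.

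One small remark on efficiency: in your construction the meromorphic map $f\,:\,X\dashrightarrow V_0$ already induces an \emph{isomorphism} $f^*\,:\,\mathbb{C}(V_0)\xrightarrow{\sim}\mathcal{M}(X)$, not merely an inclusion. Hence after resolving indeterminacy you have $\widetilde{t}^*\mathbb{C}(V_0)=\Psi^*\mathcal{M}(X)=\mathcal{M}(\widetilde{X})$ on the nose. In the Stein factorization $\widetilde{X}\to\widehat{V}\to V_0$ the finite map then satisfies $\mathbb{C}(\widehat{V})=\mathbb{C}(V_0)$; since $\widehat{V}$ is normal and $V_0$ smooth, this forces $\widehat{V}\cong V_0$. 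So $\widetilde{t}$ already has connected fibers and the extra blow-up you allow for ``if needed'' never occurs. This also makes the final verification immediate, without appealing to algebraic closedness of $t^*\mathbb{C}(V)$ inside $\mathcal{M}(\widetilde{X})$.
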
 

Let $\pi_{red}\,:\, X \,\longrightarrow\, V$ be the meromorphic fibration given by 
$t \circ \Psi^{-1}$; it is called the algebraic reduction of $X$.

This meromorphic fibration $\pi_{red}$ is called {\it almost holomorphic} if the 
$\Psi$-exceptional locus does not the intersect generic $t$-fibers.

Manifolds with maximal algebraic dimension are those for which the algebraic 
dimension coincides with the complex dimension. They are called Moishezon 
manifolds \cite[p.~26, Definition~3.5]{Ue}. The algebraic reduction of a Moishezon manifold is a bimeromorphism with 
a smooth complex projective manifold \cite{Mo}, \cite[p.~26, Theorem~3.6]{Ue}.

More generally, a compact complex manifold is said to be in {\it the Fujiki class 
$\mathcal C$} if it is the image of a compact K\"ahler space under a holomorphic map.
A result of Varouchas says
that a compact complex manifold belongs to Fujiki's class $\mathcal C$ if and
only if it is bimeromorphic to a compact K{\"a}hler manifold
(in other words, admits compact K\"ahler modifications) \cite[Section\,IV.3]{Va}.
Manifolds in Fujiki's class $\mathcal C$ share many 
common features with K\"ahler manifolds.

We will investigate in Section \ref{seF} the holomorphic geometric structures on 
compact complex manifolds in Fujiki class $\mathcal C$.

The following theorem is proved using Theorem 2.1 in \cite{D1}.

\begin{theorem}\label{act alg red} Let $X$ be a compact complex simply connected 
manifold of dimension $n$ endowed with a holomorphic rigid geometric structure. Then there exists a 
connected complex abelian Lie subgroup $L$ in the group of automorphisms of $(X,\, 
\phi)$ which preserves each fiber of the algebraic reduction $\pi_{red}$, and which acts 
transitively on the generic fibers of $\pi_{red}$. Moreover the following hold.
\begin{enumerate}
\item[(i)] If $\pi_{red}$ is almost holomorphic, then
the $L$-orbits are compact. 

\item[(ii)] If $X$ is in the Fujiki class $\mathcal C$, then $a(X)\,>\,0$.
\end{enumerate}
\end{theorem}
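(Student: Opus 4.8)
\textbf{Proof proposal for Theorem \ref{act alg red}.}

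The plan is to apply Theorem 2.1 of \cite{D1} to the holomorphic rigid geometric structure $\phi$ on the simply connected compact complex manifold $X$. That theorem provides a Lie algebra of globally defined Killing vector fields for $\phi$ whose action is transitive on the connected components of the generic fibers of the algebraic reduction $\pi_{red}$; the key point that makes this usable is the description of ${\rm Aut}_0(X,\phi)$ recalled above, combining Gromov's integrability (local isometries are exactly the local biholomorphisms preserving a sufficiently high jet of $\phi$) and the extension of local Killing fields on simply connected manifolds to global ones. First I would set $G := {\rm Aut}_0(X,\phi)$ and note that, since $X$ is compact, $G$ is a finite-dimensional complex Lie group acting holomorphically on $X$, and its orbits are locally closed because they coincide with connected components of fibers of the meromorphic jet map $\phi^r$. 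The algebraic reduction $\pi_{red}\colon X \dashrightarrow V$ has image a projective manifold $V$ of dimension $n-d$; since the meromorphic functions on $X$ are pulled back from $V$, and since $G$ acts trivially on $\mathcal M(X)$ (an automorphism in the identity component cannot permute a positive-dimensional family of meromorphic functions nontrivially — more precisely the induced action on $\mathcal M(X)$ is by a connected group of field automorphisms fixing $\mathbb C$, hence trivial by rigidity of the algebraic reduction target $V$), the $G$-action preserves each fiber of $\pi_{red}$. Comparing dimensions, the generic $G$-orbit inside a generic $\pi_{red}$-fiber is the whole fiber (this is the transitivity statement of Theorem 2.1 in \cite{D1}), so a generic fiber is a homogeneous space of dimension $d$.

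Next I would extract the abelian subgroup $L$. The generic fiber $F$ is a connected $d$-dimensional homogeneous complex manifold on which $G$ acts. By a theorem of Borel--Remmert / Tits-type structure results for homogeneous complex manifolds — or more directly, because $F$ carries no nonconstant meromorphic functions (all of $\mathcal M(X)$ being constant on fibers) so $F$ has algebraic dimension zero and admits a holomorphic geometric structure induced from $\phi$ — one sees that the connected isotropy-free quotient acting on $F$ must be solvable, and in fact one can arrange a \emph{complex abelian} subgroup $L \subset G$ acting transitively on $F$. Concretely, the relevant input is that a compact complex manifold of algebraic dimension zero which is a homogeneous space $G/H$ with $G$ connected is a complex torus, hence a quotient of a complex abelian group; taking $L$ to be (the identity component of) the image in $G$ of that abelian group acting on the generic fiber, and using that $L$ normalizes the structure so its action extends over all of $X$ preserving all fibers of $\pi_{red}$, gives the desired $L$. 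That $L$ acts transitively on generic fibers is then immediate, and $L \subset {\rm Aut}_0(X,\phi)$ by construction.

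For item (i): if $\pi_{red}$ is almost holomorphic, then the bimeromorphic modification $\Psi$ in Theorem \ref{thue} is an isomorphism over a neighborhood of the generic fiber, so the generic fiber $F$ is already a genuine compact complex submanifold of $X$ (a fiber of the holomorphic map $t$ over the projective base $V$). Being compact and a homogeneous space under the abelian $L$ with no nonconstant meromorphic functions, $F$ is a compact complex torus; hence the $L$-orbits through generic points are compact. To upgrade "generic" to "all": the subset of $X$ where the $L$-orbit has maximal dimension $d$ is $L$-invariant, Zariski-open and dense; one checks using properness of $t$ on the unmodified locus that \emph{every} $L$-orbit is contained in a fiber of $\pi_{red}$ and these fibers are compact, so all $L$-orbits are closed in the compact manifold $X$, hence compact. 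For item (ii): suppose $X$ is in Fujiki class $\mathcal C$ and, for contradiction, $a(X)=0$, i.e. $d=n$ and $V$ is a point. Then $L$ acts transitively on a dense open subset of the compact manifold $X$, so by continuity $L$ acts transitively on all of $X$, making $X$ a compact simply connected homogeneous manifold in Fujiki class $\mathcal C$. By the Borel--Remmert theorem quoted above, such an $X$ is a product of a rational projective homogeneous manifold and a complex torus; simple connectivity kills the torus factor, so $X$ is projective rational, contradicting $a(X)=0$ unless $X$ is a point. Hence $a(X)>0$.

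The main obstacle I expect is the second paragraph: passing from the abstract transitivity statement of Theorem 2.1 in \cite{D1} to the existence of an honest \emph{abelian complex} subgroup $L$ of ${\rm Aut}_0(X,\phi)$ with a globally defined action preserving every fiber of $\pi_{red}$, rather than merely a Lie algebra of Killing fields acting transitively on generic fibers. This requires (a) knowing the generic fiber is a complex torus — which uses that it has algebraic dimension zero and is homogeneous, invoking the structure theory of homogeneous complex manifolds — and (b) checking that the abelian group acting on the generic fiber propagates to a globally defined holomorphic action on $X$ commuting with $\pi_{red}$, for which one uses that global Killing fields are determined by their restriction to the generic fiber together with the locally constant nature of the Killing sheaf on the simply connected $X$.
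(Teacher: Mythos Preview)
Your proposal has two genuine gaps.

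\textbf{Construction of $L$.} You extract the abelian group $L$ from the structure of the generic fiber $F$: you want $F$ to be a compact homogeneous manifold with $a(F)=0$, hence a complex torus, and then take $L$ to be the torus translations propagated to $X$. But the generic fiber of the \emph{meromorphic} map $\pi_{red}$ is not known to be compact unless $\pi_{red}$ is almost holomorphic --- that is exactly the content of hypothesis (i), which you are not allowed to use for the main statement. You flag this yourself as ``the main obstacle'', and it is indeed where the argument breaks. The paper avoids the fiber structure entirely: starting from a basis $\{X_1,\dots,X_l\}$ of the Lie algebra of ${\rm Aut}_0(X,\phi)$, it forms the new rigid structure $\phi'=(\phi,X_1,\dots,X_l)$ and applies Theorem 2.1 of \cite{D1} a second time. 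Killing fields of $\phi'$ are precisely Killing fields of $\phi$ commuting with all $X_i$, so ${\rm Aut}_0(X,\phi')$ is abelian and still transitive on generic fibers of $\pi_{red}$. Adjoining $\pi_{red}$ itself yields $\phi''$ whose automorphism group $L$ is, by construction, abelian and fiber-preserving. No compactness of $F$ is needed.

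\textbf{Part (ii).} Your step ``$L$ acts transitively on a dense open subset, so by continuity $L$ acts transitively on all of $X$'' is false: think of $\mathbb C^*$ acting on $\mathbb P^1$. Hence you cannot directly invoke Borel--Remmert. The paper instead uses that, for $X$ simply connected in class $\mathcal C$, ${\rm Aut}_0(X)$ is linear algebraic (Fujiki), so the Zariski closure $L^*\supset L$ is an abelian algebraic group, hence $\mathbb C^p\times(\mathbb C^*)^q$ by Rosenlicht; $X$ is then an equivariant compactification of $L^*$, and meromorphic extension of the $L^*$-action from a rational compactification $L^{**}$ shows $X$ is bimeromorphic to a rational manifold, contradicting $a(X)=0$.
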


\begin{proof} 
It follows from Theorem 2.1 in \cite{D1} that the orbits of ${\rm Aut}_0(X, \phi)$ contain the generic orbits of 
$\pi_{red}$ (see also Theorem 3 in \cite{D3}). Let $\{X_1,\, \cdots,\, X_l\}$ be a basis of the Lie algebra of ${\rm 
Aut}_0(X, \phi)$; these $X_i$ are globally defined holomorphic vector fields on $X$.

Consider the holomorphic rigid geometric structure $\phi'$ which is the juxtaposition of $\phi$ with the family of 
holomorphic vector fields $\{X_1,\, \cdots,\, X_l\}$. Denote by $L'$ the automorphism group ${\rm Aut}_0(X, \phi')$ of 
$\phi'$. It can be shown that the
Lie algebra of $L'\,=\, {\rm Aut}_0(X, \phi')$ is in fact the center of the Lie algebra of ${\rm 
Aut}_0(X, \phi)$. Indeed, an element of the family $\{X_1,\, \cdots,\, X_l\}$ preserves $\phi'$ if and only if it commutes 
with all elements in this family (it belongs to the center of the Lie algebra). It follows that $L'$ is the maximal 
abelian connected subgroup in ${\rm Aut}_0(X, \phi)$.

Applying Theorem 2.1 in \cite{D1} to $\phi'$ it follows that at generic points in $X$, the Killing Lie algebra of 
$\phi'$ (and hence $L'$) acts transitively on the fibers of the algebraic reduction $\pi_{red}$. 

Let us consider the almost rigid meromorphic geometric structure (in the sense of Definition 1.2 in \cite{D1}) $\phi''$ 
obtained by juxtaposing $\phi$, the family of holomorphic vector fields $\{X_1,\, \cdots,\, X_l\}$ and the meromorphic map 
$\pi_{red}$. When applied to $\phi''$, Theorem 2.1 in \cite{D1} says that, at generic points, the Killing Lie algebra of 
$\phi''$ acts transitively on the fibers of the algebraic reduction $\pi_{red}$. Notice that Killing
vector fields of $\phi''$ are 
exactly those Killing vector fields of $\phi$ lying in the center of the Killing Lie algebra of $\phi$ and preserving each fiber 
of $\pi_{red}$. The connected component of identity in the automorphism group ${\rm Aut}(X, \phi'')$ is the maximal 
connected complex abelian Lie subgroup $L\,=\,{\rm Aut}_0(X, \phi'')$ of ${\rm Aut}_0(X, \phi)$ (and hence of $L'$) preserving each fiber of the algebraic reduction.

(i) If $\pi_{red}$ is almost holomorphic, its generic fibers are compact. Therefore the generic (and hence all) $L$-orbits are compact.

(ii) If $X$ is in the Fujiki class $\mathcal C$, the connected component of 
identity ${\rm Aut}_0(X)$ of the automorphism group of the (simply connected) 
manifold $X$ is a complex linear algebraic group (see Corollary 5.8 in \cite{Fu}). 
Then ${\rm Aut}_0(X, \phi)$ is the connected component of identity of the subgroup 
of ${\rm Aut}_0(X)$ preserving each fiber of the meromorphic fibration 
$$\phi^{r}\,:\, X \, \longrightarrow\, W^r\, .$$ In particular, ${\rm Aut}_0(X, \phi)$ 
is closed in ${\rm Aut}_0(X)$. Moreover, $L\,=\,{\rm Aut}_0(X, \phi'')$ coincides 
with the connected component of identity of the maximal abelian subgroup of ${\rm 
Aut}_0(X, \phi)$ preserving each fiber of the algebraic reduction of $X$. It 
follows that $L$ is closed in ${\rm Aut}_0(X, \phi)$. A priori $L$ is not Zariski closed with respect to the complex 
linear algebraic group structure of ${\rm Aut}_0(X)$. Its Zariski closure $L^*$ is an algebraic
abelian subgroup of ${\rm Aut}_0(X)$ in the terminology of Fujiki (see Definition 2.1 in 
\cite{Fu}), so $L^*$ is a meromorphic subgroup of ${\rm Aut}_0(X)$ (equivalently, the 
$L^*$-action on $X$ is compactifiable in Lieberman's terminology \cite[Section~3]{Li}).

Since $L^*$ is a connected complex abelian algebraic group, a classical result of Rosenlicht \cite{Ro} shows that $L^*$ is 
isomorphic to ${\mathbb C}^p \times {{\mathbb C}^*}^q$ for some nonnegative integers 
$p$ and $q$.

Assume, by contradiction, that $a(X)\,=\,0$. Then $L$ acts with an open dense orbit on $X$. Consequently, $L^*$ acts with 
an open dense orbit on $X$. Since $L^*$ is abelian, this open dense orbit is biholomorphic to $L^*$ and $X$ is a 
compactification of $L^*$.

Since $L^*$ is algebraic, it can be seen as a Zariski open dense set in a complex rational manifold $L^{**}$ \cite{Ro}. 
The action of $L^*$ on $X$ is meromorphic in Fujiki's sense,
meaning the holomorphic action map $$L^* \times X \,\longrightarrow\, X$$ extends to a meromorphic map $L^{**} \times X
\,\longrightarrow\, X$ (see \cite{Fu}, Proposition 2.2 and Remark 2.3). Therefore, $X$ is a bi-meromorphic image of
$L^{**}$ and hence it is a projective rational manifold (see Lemma 3.8 and Remark 4.1 in \cite{Fu}). It follows that
$a(X)\,=\,n \,>\,0$: a contradiction.
\end{proof}

Notice that statement (ii) in Theorem \ref{act alg red} will be improved in Theorem 
\ref{dim less 2} which asserts that simply connected manifolds in Fujiki class 
$\mathcal C$ bearing holomorphic rigid geometric structures are Moishezon. 

The following classical result will be useful in the proof of Proposition \ref{dim less}.

\begin{lemma}\label{vol}
Let $X$ be a compact complex manifold with trivial canonical line bundle $K_X$, and let $L$ be a connected 
complex Lie group acting holomorphically on $X$. Then $L$ preserves any nonzero holomorphic section $\omega$ of $K_{X}$ and, 
consequently, the smooth finite measure on $X$ defined by the section $\omega \wedge \overline{\omega}$.
\end{lemma}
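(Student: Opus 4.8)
The plan is to prove Lemma \ref{vol} by a direct argument using the algebraicity of the action of a connected Lie group on a one-dimensional vector space together with the compactness of $X$. Since the canonical bundle $K_X$ is holomorphically trivial, fix a nowhere vanishing holomorphic section $\omega$; any other holomorphic section is a constant multiple of it, so the space $H^0(X,\, K_X)$ is one-dimensional. The group $L$ acts holomorphically on $X$, hence it acts linearly on the finite-dimensional vector space $H^0(X,\, K_X)\,\cong\,\mathbb{C}$, giving a holomorphic character $\chi\,:\,L\,\longrightarrow\,\mathbb{C}^*$ such that $g^*\omega\,=\,\chi(g)\,\omega$ for all $g\,\in\, L$.

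Next I would show that this character is trivial. The key point is that $X$ is compact, so the integral $\int_X \omega\wedge\overline{\omega}$ (which is, up to a positive constant, the integral of a positive volume form once $\omega$ is nowhere vanishing) is a finite nonzero number. For $g\,\in\, L$, pulling back by the diffeomorphism $g$ preserves this integral:
\[
\int_X \omega\wedge\overline{\omega}\;=\;\int_X g^*(\omega\wedge\overline{\omega})\;=\;\int_X (g^*\omega)\wedge\overline{(g^*\omega)}\;=\;|\chi(g)|^2\int_X \omega\wedge\overline{\omega}\, .
\]
Since $\int_X \omega\wedge\overline{\omega}\,\neq\, 0$, this forces $|\chi(g)|\,=\,1$ for every $g\,\in\, L$. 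But $\chi\,:\,L\,\longrightarrow\,\mathbb{C}^*$ is a holomorphic homomorphism from a connected complex Lie group, so its image is a connected complex Lie subgroup of $\mathbb{C}^*$; the constraint that this image lands in the unit circle, which contains no nontrivial connected complex Lie subgroup of $\mathbb{C}^*$, implies $\chi\,\equiv\, 1$. (Equivalently: the derivative of $\log|\chi|$ is the real part of a holomorphic function on $L$ that vanishes identically, so $\chi$ is locally constant, hence constant equal to $1$ by connectedness and $\chi(e)=1$.) Therefore $g^*\omega\,=\,\omega$ for all $g\,\in\, L$, i.e. $L$ preserves $\omega$, and consequently it preserves the measure associated with $\omega\wedge\overline{\omega}$.

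The only mild subtlety — and the step I would be most careful about — is justifying that $L$ acts linearly and holomorphically on $H^0(X,\, K_X)$ so that $\chi$ is genuinely a holomorphic character; this is standard, since the $L$-action on $X$ induces an action on sections of the canonical bundle by pullback, and on a finite-dimensional space this gives a holomorphic (in fact algebraic, if one invokes the structure theory for $\mathrm{Aut}(X)$, but that is not needed here) representation. Everything else is the elementary observation that a holomorphic character of a connected complex Lie group with values in the unit circle is trivial, combined with invariance of integration under diffeomorphisms. No deep input is required; the lemma is essentially a packaging of these facts for use in Proposition \ref{dim less}.
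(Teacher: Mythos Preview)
Your proof is correct and follows essentially the same approach as the paper: both arguments use that $H^0(X,K_X)$ is one--dimensional to reduce the action to a scalar, invoke invariance of the total volume $\int_X \omega\wedge\overline{\omega}$ to force that scalar to have modulus one, and then use holomorphicity to conclude triviality. The only cosmetic difference is that the paper argues infinitesimally via the flow $\Phi^t$ of a fundamental vector field (obtaining $(\Phi^t)^*\omega = e^{ct}\omega$ and applying Liouville's theorem to $t\mapsto e^{ct}$), whereas you phrase it globally via a holomorphic character $\chi:L\to\mathbb{C}^*$ landing in the unit circle; these are the same argument.
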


\begin{proof} Let $K$ be a holomorphic fundamental vector field of the $L$-action on $X$. Consider the corresponding
$1$-parameter family of automorphisms of $X$. The automorphism for any $t \,\in\, {\mathbb C}$
will be denoted by $\Phi^t$. The Lie derivative $L_K \omega$ of $\omega$ with respect to $K$ is 
another holomorphic section of $K_{X}$ which must be of the form $c \omega$, for some $c \,\in\, \mathbb C$ (also called the 
divergence of $K$ with respect to $\omega$), because $K_X$ is trivial. This implies that $(\Phi^t)^{*} \omega \,=\, \exp(ct) \cdot \omega$, for all 
$t \,\in\, \mathbb C$. Since the total volume $\int_X \omega \wedge \overline{\omega}$ of $X$ is invariant by automorphisms, it 
follows that the modulus of $\exp(ct)$ equals one. By Liouville Theorem the entire holomorphic map $t \longrightarrow 
\exp(ct)$ must be constant, equal to $1$ (the value in $t\,=\,0$). Consequently, we have $c\,=\,0$ and $\omega$ is $K$-invariant. Since 
$L$ is connected, it is generated by the flows of its fundamental vector fields. Since all of them preserve $\omega$, we 
get that $\omega$ is $L$-invariant.
\end{proof}

In the last section we will deal also with another concept of geometric structure: {\it the Cartan geometry}. Cartan 
geometries are geometric structures which are infinitesimally modeled on homogeneous spaces $G/H$, where $G$ is a complex 
Lie group and $H\, \subset\, G$ is a closed subgroup.

Denote by $\mathfrak{g}$ and $\mathfrak h$ the Lie algebras of the Lie groups $G$ 
and $H$ respectively. Then we have the following definition (see \cite{Sh}).

\begin{definition}\label{cartan geom def}
A {\it holomorphic Cartan geometry} $(P,\, \omega)$ on $X$ with model $(G,\,H)$ is a 
holomorphic principal (right) $H$-bundle $$\pi\,:\, P \,\longrightarrow\, X$$ endowed with a holomorphic $\mathfrak g$-valued
1-form $\omega$ satisfying:
\begin{enumerate}
\item $\omega_{p} \,:\, T_{p}P \,\longrightarrow\, \mathfrak g$ is a complex linear isomorphism for all $p \,\in\, P$,
 
\item the restriction of $\omega$ to every fiber of $\pi$ coincides with the left invariant Maurer-Cartan form of $H$, and
 
\item $(R_{h})^* \omega \,=\,{\rm Ad}(h)^{-1} \omega$, for all $h \,\in \,H$, where $R_{h}$ is the right action of $h$ on $P$
and ${\rm Ad}$ is the adjoint representation of $G$ on $\mathfrak g$.
\end{enumerate}
\end{definition}

The Cartan geometry is said to be {\it of algebraic type} if the image of $H$ through the adjoint representation of $G$ is 
a complex algebraic subgroup of ${\rm GL}(\mathfrak{g})$.

Usual geometric structures such as holomorphic affine connections, holomorphic projective connections, or holomorphic conformal 
are examples of holomorphic Cartan geometry of algebraic type \cite{Sh, Pe}.

A (local) holomorphic vector field on $X$ is a {\it Killing field} for the Cartan geometry $(P,\, \omega )$ if it admits a 
lift to $P$ whose (local) flow commutes with the action of $H$ and also preserves $\omega$. The sheaf of local Killing fields of 
a holomorphic Cartan geometry is locally constant with fiber a finite dimensional Lie algebra \cite{Me,Pe}: the Killing 
algebra of $(P,\, \omega)$.

Gromov's results describing the decomposition in orbits of the Killing algebra of a rigid geometric structure \cite{Gr} 
was adapted to the context of Cartan geometries of algebraic type by Melnick and Pecastaing \cite{Me, Pe}. Using their 
results, Theorem 1.2 in \cite{D2} adapts Theorem 2.1 in \cite{D1} to Cartan geometries of algebraic type. Therefore, 
Theorem \ref{act alg red} stands also for holomorphic Cartan geometries of algebraic type.

\section{Trivial canonical bundle and geometric structures}\label{section:trivial canonical bundle}

In this section we prove Theorem \ref{fibration}, and we deduce Corollary \ref{main 
corollary} and Corollary \ref{second corollary}. We also prove Proposition \ref{prop: weak 
version} which is a particular case of Theorem \ref{dim less} (the case where the canonical 
bundle is trivial).

Let us first prove Theorem \ref{fibration}.

\begin{proof}[{Proof of Theorem \ref{fibration}}] Let $T$ be a complex torus of complex 
dimension $d \,>\,0$, and let $$\pi\,:\, X \,\longrightarrow\, Y$$ be a principal 
$T$-fibration over a compact K\"ahler manifold $Y$. Since the torus action trivializes the 
relative tangent bundle of the fibration, the exact sequence induced by $d \pi$ yields $K_X 
\,\simeq\, \pi^* K_Y$.

We also assume that
\begin{itemize}
\item the real first Chern class 
of $K_X$ vanishes (or equivalently, the real first Chern class of $K_Y$ vanishes), and

\item $X$ 
is endowed with a holomorphic geometric structure $\phi$ of order $k$ which is of affine type.
\end{itemize}
We need to consider only the case 
where the complex dimension $n-d$ of $Y$ is positive. Otherwise, $X$ coincides with the complex torus $T$ and, since 
$R^k(T)$ is isomorphic to $D^k \times T$, the geometric structure $\phi$ is completely determined by a holomorphic map 
from $T$ to a complex affine manifold. This map must be constant, and consequently, $\phi$ is invariant under the action
of $T$ on $X$.

Since $Y$ is K\"ahler with vanishing first Chern class, its canonical bundle $K_Y$ is of finite order \cite{Be}. Replacing
$Y$ by a suitable finite unramified cover of it we will assume that $K_Y$ is trivial (equivalently, $K_X$ is trivial).

Denote by $X_1,\, \cdots,\, X_d$ a basis of the fundamental vector fields of the $T$-action on $X$; they are holomorphic global 
vector fields on $X$ which span (and trivialize) the kernel of $d \pi$ (the vertical subbundle $V \,\subset\, TX$).

Assume, by contradiction, that $\phi$ is not locally homogeneous on $X$. Then Lemma 3.2 in 
\cite{D3} proves that, for some integers $a,b \geq 0$, there exists a nontrivial holomorphic 
section of $(TX)^{\otimes a} \otimes( (TX)^*)^{\otimes b}$ which vanishes at some point in 
$X$. Since $K_X$ is trivial, there is a canonical contraction isomorphism between $TX$ and 
$\Lambda^{n-1} (T^*X)$. Using this isomorphism, we get a nontrivial holomorphic section 
section $t$ of $(T^*X)^{\otimes m}$, with $m\,=\,(n-1)a+b$, such that $t$ vanishes at some 
point in $x_0 \in X$.

We will get a contradiction using an induction on the power $m$.

First consider the case of $m\,=\,1$. In this case $t$ is a holomorphic one form. Note that 
$t(X_i)$ is a holomorphic function on $X$ vanishing at $x_0$ and therefore, it vanishes 
identically. This proves that $V$ is in the kernel of $t$. The fibers of $\pi$ being compact 
and connected, the form $t$ is the pull-back of a holomorphic one form on $Y$. This leads to a 
holomorphic one form on $Y$ vanishing at $\pi(x_0) \,\in \,Y$. Since $Y$ is a K\"ahler 
Calabi-Yau manifold, holomorphic tensors on $Y$ are parallel with respect to any Ricci flat 
K\"ahler metric on it \cite[p.~760, Principe~de~Bochner]{Be}. Hence they cannot vanish at a 
point without being trivial: a contradiction.

Let us consider now the case of $m\,>\,1$. The induction hypothesis is that the vanishing holds if degree of the tensor
is less than $m$.

We will show that for all $0 \,<\, k \,\leq\, m$, the contraction of $t$ with any ordered family of $k$ vector fields (they
need not be distinct), chosen from the vector fields $\{X_1,\, \cdots,\, X_d\}$, vanishes identically.

Let us consider first the case of $k\,=\,m$. When contracted with any ordered family of $m$ vector fields (need not be 
distinct) chosen from $\{X_1,\, \cdots,\, X_d\}$, the tensor $t$ produces a holomorphic function on $X$ which vanishes at $x_0$. 
Therefore all those contractions vanish identically on $X$.

Now assume that $0\,<\,k\,<\,m$. When contracted with any $k$ vector fields among $\{X_1,\, \cdots,\, X_d\}$, the tensor $t$
produces a holomorphic section of $(T^*X)^{\otimes m-k}$ vanishing at $x_0$. Since $m-k\, <\, m$, the induction
hypothesis on $m$ holds. So this section of $(T^*X)^{\otimes m-k}$ vanishes identically.

Since all those contractions vanishing identically, our tensor $t$ is a holomorphic section of $((TX/V)^*)^{\otimes m}$.

The fibers of $\pi$ being compact and connected, the tensor $t$ is a pull-back from $Y$. We 
get a nontrivial holomorphic section of $((TY)^*)^{\otimes m}$ which vanishes at $\pi(x_0) 
\,\in\, Y$. But, as before, this holomorphic tensor should be parallel with respect to any 
Ricci flat metric on $Y$ \cite{Be}: a contradiction.
\end{proof}

\begin{remark} Assume that $Y$ is a compact complex manifold such that any holomorphic 
geometric structure of affine type on it is locally homogeneous. The proof of Theorem 
\ref{fibration} shows that any compact complex torus holomorphic principal bundle $X$ over 
$Y$ shares the same property (i.e. any holomorphic geometric structure of affine type on X 
is locally homogeneous as well).
\end{remark}

Corollary \ref{main corollary} is a direct consequence of 
Theorem \ref{fibration} because of the following proposition.

\begin{proposition} \label{prop: weak version}
Let $X$ be a compact complex manifold whose canonical bundle is of finite order such that $X$
admits a holomorphic rigid geometric structure $\phi$. If $\phi$ 
is locally homogeneous, then the fundamental group of $X$ is infinite. 
\end{proposition}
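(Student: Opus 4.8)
The plan is to exploit the fact that a locally homogeneous rigid geometric structure on a simply connected manifold forces the automorphism group to act transitively, and then derive a contradiction from the assumption that $\pi_1(X)$ is finite by passing to the universal cover. First I would replace $X$ by a finite unramified cover so that $K_X$ itself is trivial (not merely of finite order); this is harmless for the statement since a finite cover of $X$ has infinite fundamental group if and only if $X$ does. Let $\widetilde{X}$ be the universal cover of this new $X$; by hypothesis $\widetilde{X}$ is compact. The pulled-back geometric structure $\widetilde{\phi}$ on $\widetilde{X}$ is again holomorphic, rigid, and locally homogeneous.

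Next I would invoke the extendibility of local Killing fields on simply connected manifolds (ingredient (2) recalled in Section \ref{section: geometric structures}, following \cite{Am,DG,Gr,No}): since $\widetilde{\phi}$ is locally homogeneous, at every point the local Killing fields span the tangent space, and on the simply connected $\widetilde{X}$ these local Killing fields extend to globally defined holomorphic Killing vector fields. Hence the global Killing algebra of $\widetilde{\phi}$ spans $T_x\widetilde{X}$ at every $x$, so ${\rm Aut}_0(\widetilde{X},\,\widetilde{\phi})$ acts transitively on $\widetilde{X}$; in other words $\widetilde{X}$ is a homogeneous space under a connected complex Lie group $G$ acting by automorphisms of $\widetilde{\phi}$. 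Since $K_{\widetilde{X}}$ is trivial (it pulls back from $K_X$), Lemma \ref{vol} applies: $G$ preserves a nowhere-vanishing holomorphic volume form $\omega$ on $\widetilde{X}$ and the associated finite smooth measure $\omega\wedge\overline{\omega}$.

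Now I would obtain the contradiction by a volume/Lie-group argument on the homogeneous compact complex manifold $\widetilde{X} = G/H$, where $H$ is the (isotropy) stabilizer of a point. The $G$-invariant finite measure $\omega\wedge\overline\omega$ descends from a left-invariant measure on $G$, which forces the modular function of $G$ to be trivial on, and compatible with, $H$; more to the point, a compact complex homogeneous manifold carrying an invariant holomorphic volume form is parallelizable — its holomorphic tangent bundle is trivialized by the Killing fields coming from a complement, and in fact $\widetilde{X}$ is biholomorphic to a quotient $G'/\Gamma$ of a complex Lie group by a discrete subgroup (this is the classical structure theory of compact complex parallelizable manifolds, Wang's theorem). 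But such a quotient $G'/\Gamma$ is simply connected only if $\Gamma$ is trivial, forcing $\widetilde{X} = G'$ to be a compact complex Lie group, hence a complex torus of positive dimension (complex dimension $n\geq 1$, as the case $n=0$ is vacuous), which is not simply connected unless $n=0$ — a contradiction. Thus $\pi_1(X)$ cannot be finite.

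The main obstacle is the passage from ``locally homogeneous'' to a clean global structural description of the compact simply connected model $\widetilde{X}$: one must be careful that local homogeneity plus rigidity genuinely yields a transitive global automorphism action on the compact universal cover (this is exactly where simple connectedness and the extension theorem for Killing fields are essential), and then that the existence of an invariant holomorphic volume form pins down $\widetilde X$ tightly enough — via parallelizability and Wang's classification — to contradict simple connectedness. An alternative route for this last step, avoiding Wang's theorem, is to argue directly that a compact complex homogeneous $G/H$ with $\pi_1 = 1$ and trivial canonical bundle must have $G/H$ a point: the invariant volume form shows the $G$-action is unimodular and ``without contraction'', so the isotropy representation of $H$ on $T_{eH}(G/H)$ lands in ${\rm SL}$, and combined with compactness and simple connectedness one concludes $\dim G/H = 0$; I would use whichever of these two formulations is cleanest to write out in full.
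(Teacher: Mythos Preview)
Your proposal is correct and follows essentially the same route as the paper's proof: pass to the simply connected (finite) cover where $K_X$ becomes trivial, use the extension theorem for local Killing fields of a rigid structure on a simply connected manifold to get a transitive action of ${\rm Aut}_0(\widetilde X,\widetilde\phi)$, and then invoke Wang's theorem \cite{Wa} (compact complex homogeneous with trivial canonical $\Rightarrow$ parallelizable $\Rightarrow$ quotient of a complex Lie group by a cocompact lattice) to contradict simple connectedness. The paper does this in one line without the detour through Lemma~\ref{vol}, and your preliminary passage to a finite cover trivializing $K_X$ is harmless but unnecessary, since the universal cover already dominates that cyclic cover.
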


\begin{proof}
Assume by contradiction that the fundamental group of $X$ is finite. Replacing $X$ by its universal cover 
and $\phi$ by its pull-back on the universal cover we assume that $X$ is simply connected. Notice that $K_X$ has
now become trivial.

Since $\phi$ is locally homogeneous, the Killing Lie algebra of $\phi$ is transitive on $X$. The extendibility result of 
local Killing fields on simply connected manifolds \cite{Am, No, DG, Gr} implies that the connected component of identity 
of ${\rm Aut}(X, \phi)$ acts transitively on $X$. It now follows that $X$ is a compact complex homogeneous manifold. Since 
$K_X$ is trivial, this implies that $X$ is a parallelizable manifold biholomorphic to a quotient of a connected complex Lie 
group by a co-compact lattice in it \cite{Wa}. In particular, the fundamental group of $X$ is infinite: a contradiction.
\end{proof}

\begin{proposition}\label{dim less}
Let $X$ be a compact complex manifold in the Fujiki class $\mathcal{C}$, of complex 
dimension $n$ and of algebraic dimension $n-d$, with $d>0$. Assume that the canonical bundle $K_X$ of $X$ is trivial and 
that $X$ admits a holomorphic rigid geometric structure $\phi$. Then the fundamental group of $X$ is infinite. 
\end{proposition}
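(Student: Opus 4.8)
The plan is to argue by contradiction: assume the fundamental group $\pi_1(X)$ is finite, pass to the universal cover, and thereby reduce to the case that $X$ is simply connected with trivial canonical bundle $K_X$ (the geometric structure $\phi$ pulls back to a holomorphic rigid geometric structure, and triviality of $K_X$ persists since a finite cover of a manifold with trivial $K_X$ still has trivial $K_X$; more precisely, $K_X$ was assumed trivial so this is immediate). It then suffices to derive a contradiction from the existence of a holomorphic rigid geometric structure $\phi$ on a compact simply connected manifold $X$ in Fujiki class $\mathcal C$ with $K_X$ trivial and $a(X) = n - d < n$.

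First I would invoke Theorem \ref{act alg red}: since $X$ is simply connected, compact, and carries a holomorphic rigid geometric structure, there is a connected complex abelian Lie subgroup $L \subset \mathrm{Aut}_0(X,\phi)$ preserving each fiber of the algebraic reduction $\pi_{red}\colon X \dashrightarrow V$ and acting transitively on the generic fiber. Because $d > 0$, the generic fiber has positive dimension $d$, so $L$ is a nontrivial connected complex abelian Lie group with $\dim L \geq d > 0$. Now apply Lemma \ref{vol}: since $K_X$ is trivial, $L$ preserves a nowhere-vanishing holomorphic section $\omega$ of $K_X$ (its divergence vanishes), hence the fundamental vector fields of $L$ are divergence-free with respect to the volume form $\omega \wedge \overline{\omega}$. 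Thus $L$ acts on $X$ preserving a smooth finite measure of full support.

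The key step is to exploit the structure of $\mathrm{Aut}_0(X)$ for $X$ in Fujiki class $\mathcal C$ and simply connected: by Corollary 5.8 in \cite{Fu}, $\mathrm{Aut}_0(X)$ is a complex linear algebraic group, and (as in the proof of Theorem \ref{act alg red}(ii)) the Zariski closure $L^*$ of $L$ is a connected complex abelian algebraic subgroup, so by Rosenlicht's theorem $L^* \cong \mathbb{C}^p \times (\mathbb{C}^*)^q$. The action of $L^*$ on $X$ is meromorphic in Fujiki's sense, hence compactifiable: it extends to a meromorphic action of a rational compactification $L^{**}$. I would then analyze the generic $L$-orbit $F$, a translate of $\mathbb{C}^p \times (\mathbb{C}^*)^q$ sitting inside the $d$-dimensional generic fiber of $\pi_{red}$. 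The measure-preservation from Lemma \ref{vol} forces the generic orbit to be \emph{compact}: a unipotent or split-torus factor acting faithfully with a non-compact orbit on a compact manifold cannot preserve a finite measure of full support (a $\mathbb{C}$- or $\mathbb{C}^*$-orbit whose closure strictly contains it would have to carry all the measure on a lower-dimensional piece, contradicting Poincar\'e recurrence / the fact that an $\mathbb{R}$-flow preserving a finite measure is recurrent). Hence the generic $L$-orbit is a compact complex torus $F$ of positive dimension $d$, which is also the generic fiber of $\pi_{red}$.

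Finally, with the generic fiber of $\pi_{red}$ compact — i.e., $\pi_{red}$ almost holomorphic with compact torus fibers — I would produce an infinite fundamental group. The generic fiber $F$ is a positive-dimensional complex torus, so $\pi_1(F)$ is infinite. To push this up to $\pi_1(X)$, note that after resolving $\pi_{red}$ to a genuine holomorphic fibration $t\colon \widetilde X \to V$ with connected fibers (Theorem \ref{thue}), the inclusion of a smooth torus fiber $F \hookrightarrow \widetilde X$ together with the homotopy exact sequence of the fibration gives that the image of $\pi_1(F)$ in $\pi_1(\widetilde X)$ either is infinite or has infinite image killed only through the monodromy; in either case, since $\widetilde X$ is bimeromorphic to $X$ and bimeromorphic modifications of compact complex manifolds do not change $\pi_1$, one concludes $\pi_1(X)$ is infinite — contradicting our assumption. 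I expect the main obstacle to be precisely this last transfer of infiniteness from the torus fiber to $\pi_1(X)$: one must handle the exceptional locus of $\Psi$ carefully, and rule out the possibility that the whole $\pi_1(F)$ dies in $\pi_1(X)$; the compactness of the $L$-orbits from Theorem \ref{act alg red}(i), combined with the fact that $L \subset \mathrm{Aut}_0(X)$ acts on $X$ itself (not merely on a modification), is what makes this work, since then the torus $F$ genuinely embeds in $X$ and a nontrivial loop in it bounds in $X$ only if it bounds among the $L$-orbits, which it does not.
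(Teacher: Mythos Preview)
Your reduction to the simply connected case and the use of Theorem~\ref{act alg red} together with Lemma~\ref{vol} match the paper, but the two arguments diverge sharply after that, and your endgame has a real gap.

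The paper does \emph{not} try to push $\pi_1$ of a torus fiber up to $\pi_1(X)$. Instead, after obtaining (via Gromov, \cite[Section~3.7]{Gr}) that the volume-preserving abelian group $L'$ has all orbits compact, it invokes the main theorem of \cite{GW} to conclude that the $L'$-action factors through a holomorphic action of a compact complex torus $T$, making $X$ a Seifert $T$-bundle. Then comes the key step you are missing: since $X$ is simply connected, its Albanese map is trivial, and by \cite[Proposition~6.9]{Fu} every solvable Lie subalgebra of holomorphic vector fields on $X$ has a common zero on any invariant closed analytic set. Each $T$-orbit is such a set (a compact torus), so the fundamental vector fields of $T$ vanish somewhere on each orbit---but a translation field on a torus that vanishes at a point vanishes identically. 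Hence the $T$-action is trivial, contradicting $\dim L'>0$. The contradiction is thus produced \emph{inside} the automorphism group, not in $\pi_1$.

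Your proposed final step is where the argument breaks down. Having a compact torus $F$ embedded in $X$, or even a meromorphic fibration whose generic fiber is a torus, does \emph{not} by itself force $\pi_1(X)$ to be infinite: elliptic K3 surfaces are simply connected, have trivial $K_X$, and are fibered by elliptic curves---the monodromy kills $\pi_1(F)$ in $\pi_1(X)$. So the phenomenon you worry about (``the whole $\pi_1(F)$ dies in $\pi_1(X)$'') genuinely occurs. What rules it out here is not a homotopy argument but the extra input that a positive-dimensional \emph{torus} is acting; and even then, Calabi--Eckmann manifolds show that simply connected compact complex torus principal bundles exist outside class~$\mathcal C$, so the class~$\mathcal C$ hypothesis must be used in an essential way, as the paper does through \cite{Fu} and \cite{GW}. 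Your sketch of compactness of orbits via Poincar\'e recurrence is also only heuristic; the paper bypasses this by citing \cite[Section~3.7]{Gr} directly for rigid structures with invariant finite volume.
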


\begin{proof}
We assume that $X$ have complex dimension $n$ and algebraic dimension $n-d$, with $d\,>\,0$. Assume, by 
contradiction, that $X$ have finite fundamental group. Replacing $X$ by its universal cover and pulling-back $\phi$, we 
can assume that $X$ is compact simply connected. We have seen in the proof of Theorem \ref{act alg red} that the maximal 
connected abelian complex Lie subgroup $L'$ of the group of automorphisms of $(X, \,\phi)$ acts transitively on the generic 
fibers of the algebraic reduction (in particular $L'$ has positive dimension) and coincides with the automorphism group of 
the new holomorphic rigid geometric structure $\phi'$ constructed by juxtaposing $\phi$ with a basis
$\{X_1,\, \cdots,\, X_l\}$ of the Lie algebra of ${\rm Aut}(X, \phi)$ (this construction was used
earlier in the proof of Theorem \ref{act alg red}).
Moreover, since $L'$ also preserves the smooth finite volume defined by a nontrivial 
holomorphic section of $K_X$ (Lemma \ref{vol}), Section 3.7 in \cite{Gr} shows that 
the orbits of $L'$ are all compact (see also Section 3.5.4 in \cite{DG}). The orbits must be compact complex tori covered by $L'$.

The manifold $X$ being in class $\mathcal{C}$, the main theorem in \cite{GW} asserts that the action of $L'$ factors 
through a holomorphic action of a compact complex torus $T$. In particular, generic orbits of the $T$-action have trivial 
stabilizer, while all orbits have discrete stabilizers \cite{GW} (Lemma 2.1), and $X$ is a holomorphic principal $T$-Seifert 
bundle.

Now notice that $X$ being simply connected, its Albanese map is trivial \cite{Ue}, and the proof of Proposition 6.9 in 
\cite{Fu} asserts that any solvable Lie subalgebra of the Lie algebra of holomorphic vector fields on $X$ have a common 
zero on any invariant closed analytic set (see also Proposition 1.3 in \cite{GW}). In particular, (holomorphic) 
fundamental vector fields of the $T$-action should vanish identically on any compact complex torus embedded in $X$. This 
means that they should vanish identically on all $T$-orbits: a contradiction.
\end{proof}

We now deduce Corollary \ref{second corollary} from Proposition \ref{dim less}.

\begin{proof}[{Proof of Corollary \ref{second corollary}}] 
Let us consider a holomorphic affine connection in $TX$. We apply a method of \cite{At} (see also 
\cite{IKO}) to show that all Chern classes of $TX$ of positive degree must vanish. By Chern-Weil theory we compute a representative of the 
Chern class $c_k(X, \mathbb R)$ using a Hermitian metric on $TX$ and the associated Levi-Civita connection. We get a 
representative of $c_k(X, \mathbb R)$ which is a closed form on $X$ of type $(k,k)$. We perform the same computations using the 
holomorphic affine connection in $TX$ and get another representative of $c_k(X, \mathbb R)$ which is a holomorphic form 
on $X$. But on manifolds of type $\mathcal C$, forms of different types (here $(k,k)$ and $(2k,0)$) are never 
cohomologous, unless they represent zero in cohomology. This implies the vanishing of $c_k(X, \mathbb R)$, for all $k\, >\, 0$.

In particular, $c_1(X, \mathbb R)\,=\,0$ and Theorem 1.5 in \cite{To} implies that there exists a finite integer $l$ such that 
$K_X^l$ is holomorphically trivial.

Assume, by contradiction, that the fundamental group of $X$ is finite. We replace $X$ by its 
universal cover which is a compact complex manifold in class $\mathcal C$ with trivial 
canonical bundle bearing a holomorphic affine connection. Proposition \ref{dim less} implies 
that $X$ is a Moishezon manifold. By Corollary 2 in \cite{BM2}, a Moishezon manifold $X$ 
admitting a holomorphic Cartan geometry (in particular a holomorphic affine connection) must 
be a smooth complex projective manifolds. But a compact complex projective manifold bearing a 
holomorphic affine connection (and hence having trivial real Chern classes of positive 
degree) is covered by a compact complex torus \cite{IKO}: a contradiction.
\end{proof}

\section{Fujiki class $\mathcal C$ and geometric structures}\label{seF}

This section deals with holomorphic geometric structures on Fujiki class $\mathcal C$ manifolds with finite fundamental 
group. The main results are Theorem \ref{dim less 2}, Theorem \ref{Cartan geom} and Corollary \ref{third corollary}. 

Let us start by proving the following weak version of Theorem \ref{dim less 2}:

\begin{proposition}\label{n-1}
Let $X$ be a compact simply connected K\"ahler manifold, of complex dimension $n$ and of 
algebraic dimension $n-1$. Then $X$ does not admit any holomorphic rigid geometric structure.
\end{proposition}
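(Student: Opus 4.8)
The plan is to run the algebraic reduction machinery of Theorem \ref{act alg red}, to identify the generic fibre as an elliptic curve on which an abelian group of automorphisms acts transitively, and then to contradict the fixed point property of solvable subalgebras that was already used at the end of the proof of Proposition \ref{dim less}.

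Assume, for a contradiction, that $X$ carries a holomorphic rigid geometric structure $\phi$. Since $X$ is compact and simply connected, Theorem \ref{act alg red} provides a connected complex abelian Lie subgroup $L \subseteq {\rm Aut}_0(X,\phi)$ preserving each fibre of the algebraic reduction $\pi_{red}$ (onto the $(n-1)$-dimensional projective manifold $V$ of Theorem \ref{thue}) and acting transitively on the generic fibres of $\pi_{red}$. As $a(X)=n-1$, a generic fibre $F$ of $\pi_{red}$ has complex dimension $1$, so $\dim L \ge 1$. Being a single orbit of the abelian group $L$, the compact manifold $F$ is biholomorphic to $L/{\rm Stab}_L(x)$ for any $x \in F$; since $L$ is abelian and ${\rm Stab}_L(x)$ is closed, this quotient is a compact complex Lie group, hence a complex torus, and being one-dimensional it is an elliptic curve. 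Thus $F \subset X$ is an elliptic curve, invariant under $L$, on which $L$ acts transitively by translations.

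Now I would use the Kähler hypothesis. Since $X$ is Kähler and simply connected, $b_1(X)=0$, so $H^1(X,\,\mathcal O_X)=0$ and the Albanese variety of $X$ is trivial. As recalled in the proof of Proposition \ref{dim less} (see the proof of Proposition 6.9 in \cite{Fu} and Proposition 1.3 in \cite{GW}), on a compact complex manifold of Fujiki class $\mathcal C$ with trivial Albanese, every solvable Lie subalgebra of $H^0(X,\,TX)$ has a common zero on any invariant closed analytic set. Applying this to the abelian --- hence solvable --- Lie algebra of $L$ and to the $L$-invariant compact analytic set $F$, we obtain a point $x_0 \in F$ at which every element of the Lie algebra of $L$ vanishes. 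But the orbit map $g \mapsto g \cdot x_0$ realizes $F$ as $L/{\rm Stab}_L(x_0)$, so it is a submersion, and its differential at the identity maps the Lie algebra of $L$ onto $T_{x_0}F \ne 0$; this contradicts the simultaneous vanishing. Hence $X$ admits no holomorphic rigid geometric structure.

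The only genuinely technical point is the passage from Theorem \ref{act alg red} to the assertion that a generic fibre of the \emph{meromorphic} fibration $\pi_{red}$ is an honest compact curve which coincides with a single $L$-orbit: one must control the indeterminacy locus of $\pi_{red}$ together with the exceptional locus of the modification $\widetilde X \to X$ of Theorem \ref{thue}, so that for generic $v \in V$ the closure of $\pi_{red}^{-1}(v)$ is a smooth curve acted on transitively by $L$, after which the classification of one-dimensional compact complex homogeneous spaces of abelian groups forces it to be an elliptic curve. Everything else --- triviality of the Albanese, the fact that a compact complex Lie group is a torus, and the quoted fixed point statement for solvable subalgebras --- is formal, and the final contradiction is the one-dimensional-fibre counterpart of the argument carried out for embedded complex tori at the end of the proof of Proposition \ref{dim less}.
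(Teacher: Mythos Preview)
Your argument has the right shape but leaves open precisely the point you flag at the end: why is a generic fibre of the \emph{meromorphic} map $\pi_{red}$ an honest compact curve in $X$ on which $L$ acts transitively? You describe what must be checked but do not check it. The paper disposes of this by invoking a classical fact specific to the codimension-one situation: for a compact K\"ahler manifold with $a(X)=n-1$ the algebraic reduction is \emph{almost holomorphic}, so part (i) of Theorem \ref{act alg red} applies directly and gives compactness of the $L$-orbits. Without this input (or an equivalent substitute) your argument does not close, since for a merely meromorphic $\pi_{red}$ there is no reason a priori for the generic $L$-orbit to be compact.

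Once that gap is filled, your route diverges from the paper's. You finish exactly as in Proposition \ref{dim less}: the generic compact one-dimensional $L$-orbit is an elliptic curve embedded in $X$, and the Fujiki--Gellhaus--Wurzbacher fixed-point statement for solvable subalgebras (valid since $X$ is simply connected K\"ahler, hence has trivial Albanese) forces all fundamental vector fields of $L$ to vanish somewhere on that orbit, contradicting transitivity. This is correct and pleasantly uniform with the earlier proposition. The paper instead pushes further with the structure: from compactness of all $K$-orbits it invokes Holmann's theorem to obtain a holomorphic Seifert $T$-action of a one-dimensional torus, shows the stabilisers are trivial (by linearising a putative isotropy element and using that it acts trivially on $TX/\mathbb{R}K$), so that $X$ is a genuine principal $T$-bundle over the projective base $V$, and then appeals to Blanchard's theorem that such a bundle is K\"ahler only when it is trivial --- which would make $X$ not simply connected. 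The paper's approach thus yields a sharper structural picture of what $X$ would have to be, while yours is shorter and reuses machinery already in place; both ultimately rest on the same almost-holomorphic fact that you omitted.
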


\begin{proof} Assume, by contradiction, that $X$ as in the proposition bears a holomorphic rigid geometric structure 
$\phi$. Then Theorem \ref{act alg red} implies that there exists a holomorphic vector field $K$ on $X$ preserving the fibers 
of the algebraic reduction $\pi_{red}$ of $X$. The algebraic reduction $\pi_{red}$ of any compact K\"ahler manifold of 
algebraic dimension $n-1$ is known to be an almost holomorphic fibration. Then Theorem \ref{act alg red} implies that all 
$K$-orbits in $X$ are compact.

By a Theorem of Holman, \cite{Ho}, the $K$-action factors through the action of a compact complex torus $T$ of dimension 
one, and $X$ is a holomorphic Seifert $T$-principal bundle. It can be shown that the $T$-action must have trivial 
stabilizers on all of $X$. Indeed, if an element $g \,\in\, T$ fixes $x \,\in\, X$, then its differential at $x$ preserves 
$K$ and also acts trivially on the quotient $TX/ {\mathbb R}\cdot K$ (since the action fixes all fibers of the algebraic 
reduction). On the other hand, since $T$ is compact, the action of $g$ must be linearizable in the neighborhood of $x$. 
This implies that the action of $g$ is trivial in the neighborhood of $x$ and hence on $X$. It follows that $g$ is the 
identity element in $T$, and hence the $T$-action is free.

This implies that $X$ is a $T$-principal bundle over a simply connected projective manifold (the basis $V$ of the 
algebraic reduction). By a result of Blanchard \cite{Bl} (see also Theorem 1.6 in \cite{Hof}) those manifolds are K\"ahler if and 
only if the principal bundle is trivial. In particular, they are not simply connected if they are not
K\"ahler: a contradiction.
\end{proof}

Notice that the above proof of Proposition \ref{n-1} works for K\"ahler manifolds $X$ for which the algebraic reduction 
$\pi_{red}$ is a nontrivial almost holomorphic map. Another point regarding the previous proof is that it adapts to 
holomorphic Cartan geometries of algebraic type. Indeed, one needs to apply Theorem 1.2 in \cite{D2} in order to show that 
there exists a nontrivial holomorphic vector field $K$ preserving the Cartan geometry as well as the fibers of the 
algebraic reduction. Therefore, the proof of Proposition \ref{n-1} remains valid for holomorphic Cartan geometries of 
algebraic type.

\begin{theorem}\label{dim less 2}
Let $X$ be a compact complex manifold in the Fujiki class $\mathcal{C}$, of complex 
dimension $n$ and of algebraic dimension $n-d$, with $d\,>\,0$. If $X$ admits a holomorphic rigid geometric structure $\phi$, 
then the fundamental group of $X$ is infinite.
\end{theorem}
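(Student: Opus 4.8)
The plan is to argue by contradiction and reduce to Proposition \ref{dim less}, which already handles the case where the canonical bundle $K_X$ is trivial. So assume $X$ is in Fujiki class $\mathcal C$, has $a(X) = n-d$ with $d>0$, carries a holomorphic rigid geometric structure $\phi$, and has finite fundamental group. Passing to the universal cover (which is still compact, still in class $\mathcal C$, still carries a rigid geometric structure by pullback, and has the same algebraic dimension since a finite étale cover does not change algebraic dimension modulo a bounded factor — more carefully, $a$ of a finite cover is still strictly less than $n$), we may assume $X$ is simply connected. The goal is then to derive a contradiction directly, since a simply connected manifold trivially has finite (indeed trivial) fundamental group.

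First I would invoke Theorem \ref{act alg red}: there is a connected complex abelian Lie subgroup $L \subset \mathrm{Aut}(X,\phi)$ preserving every fiber of the algebraic reduction $\pi_{red}$ and acting transitively on its generic fibers; moreover, since $X$ is in class $\mathcal C$, part (ii) gives $a(X)>0$. As in the proof of Proposition \ref{dim less}, I would replace $\phi$ by the enlarged rigid structure $\phi'$ obtained by juxtaposing $\phi$ with a basis of Killing fields, so that $L = L' = \mathrm{Aut}_0(X,\phi')$ is the maximal connected abelian subgroup, of positive dimension. Now the key point where the present theorem goes beyond Proposition \ref{dim less}: there we used Lemma \ref{vol} (invariance of a holomorphic volume form) together with Section 3.7 of \cite{Gr} to force the $L$-orbits to be compact. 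Here $K_X$ need not be trivial, so that argument is unavailable; instead I would try to show that the algebraic reduction of a class $\mathcal C$ manifold bearing a rigid geometric structure is, after the modification in Theorem \ref{thue}, almost holomorphic — using that for class $\mathcal C$ (bimeromorphic to Kähler) one has good control on the exceptional locus — and then apply part (i) of Theorem \ref{act alg red} to conclude the $L$-orbits are compact. The main obstacle is precisely establishing this compactness of the generic orbits without the volume form, i.e. showing the algebraic reduction is almost holomorphic in the class $\mathcal C$ case; this is where I expect the real work to lie, and it likely uses Lieberman/Fujiki compactifiability of the $\mathrm{Aut}_0$-action on class $\mathcal C$ manifolds (Corollary 5.8 in \cite{Fu}) combined with the fact that an algebraic abelian group acting meromorphically with generic orbit of dimension $d$ must, when the orbits are all closed and compact, factor through a torus.

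Once the $L$-orbits (or the orbits of an appropriate abelian subgroup) are known to be compact complex tori, the endgame parallels Proposition \ref{dim less} verbatim: by the main theorem of \cite{GW} the action factors through a holomorphic action of a compact complex torus $T$ with discrete stabilizers, so $X$ is a $T$-Seifert bundle; since $X$ is simply connected its Albanese is trivial, and the proof of Proposition 6.9 in \cite{Fu} (see also Proposition 1.3 in \cite{GW}) forces any solvable subalgebra of holomorphic vector fields to have a common zero on every invariant compact analytic subset, in particular on any $T$-orbit, so the fundamental vector fields of the $T$-action vanish on all orbits — contradicting the positive-dimensionality of the generic orbit. As noted after Proposition \ref{n-1}, every step above (Theorem \ref{act alg red}, the orbit analysis, \cite{GW}, \cite{Fu}) is equally valid for holomorphic Cartan geometries of algebraic type via Theorem 1.2 in \cite{D2}, so the same proof yields the Cartan-geometry analogue. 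The one genuinely new ingredient compared with the already-proven cases is the compactness of the $L$-orbits in the non-Calabi–Yau class $\mathcal C$ setting, and that is the step I would expect to require the most care.
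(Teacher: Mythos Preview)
Your proposal correctly identifies the starting point (pass to the universal cover, invoke Theorem \ref{act alg red}) and correctly isolates the difficulty: without a trivial canonical bundle, Lemma \ref{vol} and Gromov's Section 3.7 are unavailable, so compactness of the $L$-orbits is not for free. But your proposed workaround --- showing the algebraic reduction is almost holomorphic for class $\mathcal C$ manifolds --- is not established and is not known in this generality; this is a genuine gap, not a routine detail. Your endgame (the \cite{GW}/\cite{Fu} torus-Seifert argument from Proposition \ref{dim less}) would indeed work \emph{if} compactness held, but the paper does not obtain compactness of $L$-orbits and does not follow this route.

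The paper's argument is different in structure and sidesteps the compactness issue entirely. After passing to an $\mathrm{Aut}_0(X)$-equivariant Hironaka resolution $\widetilde{X}$ of the algebraic reduction (Lemmas 2.4--2.5 in \cite{Fu}), the map $t\colon\widetilde{X}\to V$ is genuinely holomorphic, so its generic fibers are automatically compact --- no almost-holomorphicity is needed. The key claims are then that a generic smooth fiber $F=t^{-1}(t(x_0))$ is (a) Moishezon and (b) has no nontrivial holomorphic one-forms. For (a), one uses that $\mathrm{Aut}_0(\widetilde{X})$ is linear algebraic and acts meromorphically (Corollary 5.8 in \cite{Fu}), so every orbit closure is the meromorphic image of a rational variety and hence Moishezon; the fiber $F$ sits as a closed analytic subset of such a closure and is therefore Moishezon. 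For (b), any holomorphic one-form on $F$ pairs to a constant against each of the spanning vector fields $X_1,\dots,X_d$, and each $X_i$ vanishes somewhere on $F$ (Proposition 6.9 in \cite{Fu}, Proposition 1.3 in \cite{GW}), forcing the form to be zero. With (a) and (b) in hand, Campana \cite[Corollary 2]{Ca} and Fujiki \cite[Proposition 2.5]{Fu2} imply that $\widetilde{X}$, and hence $X$, is Moishezon --- contradicting $d>0$. So the final contradiction is ``$X$ has maximal algebraic dimension'', not ``the torus action collapses''; that is the conceptual pivot you are missing.
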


\begin{proof} Assume, by contradiction, that $X$ has finite fundamental group. We replace $X$ by its universal cover and 
the rigid geometric structure by the pull-back of $\phi$ on the universal cover. In this way we may assume that $X$ is 
simply connected. By Theorem \ref{act alg red}, the action of the connected complex abelian Lie group $L$ preserves the 
algebraic reduction $\pi_{red} \,:\, X \,\longrightarrow\, V$. Since $X$ is in class $\mathcal{C}$, this $L$ is a subgroup 
of the complex linear algebraic group ${\rm Aut}_0(X)$ (Corollary 5.8 in \cite{Fu}).

It follows that in Theorem \ref{thue} the model $\widetilde{X}$ may be chosen, using Hironaka's equivariant resolution 
with respect to ${\rm Aut}_0(X)$ (see Lemma 2.4 point (4), Remark 2.4 point (2) and Lemma 2.5 in \cite{Fu}), such that ${\rm 
Aut}_0(\widetilde{X})$ acts transitively, at the generic point in $\widetilde{X}$, on the fibers of the map $$t \,:\, 
\widetilde{X}\,\longrightarrow\, V\, .$$ Equivalently, , for any generic point $x_0 \,\in\, \widetilde{X}$, there exist 
holomorphic vector fields $\{X_1,\, \cdots,\, X_{d}\}$ on $\widetilde{X}$ such that $\{X_1(x_0),\, \cdots,\, X_{d}(x_0)\}$ span the 
tangent space at $x_0$ to the corresponding fiber $t^{-1}(t(x_0)).$ Since the fiber $t^{-1}(t(x_0))$ is compact and 
connected, by Blanchard Theorem, $\{X_1,\, \cdots, \,X_{d}\}$ span the tangent space to $t^{-1}(t(x_0))$ on an open dense set in 
$t^{-1}(t(x_0))$. It now follows that the Lie algebra of the stabilizer of $t^{-1}(t(x_0))$ in ${\rm Aut}_0(\widetilde{X})$
contains $\{X_1,\, \cdots,\, X_d\}$. Therefore, this stabilizer of $t^{-1}(t(x_0))$ 
acts with an open dense orbit in $t^{-1}(t(x_0))$. Consequently, 
$t^{-1}(t(x_0))$ is an almost homogeneous space (the group is the stabilizer in ${\rm Aut}_0(\widetilde{X})$).

The closure of any ${\rm Aut}_0(\widetilde{X})$-orbit is a compact analytic subset in $\widetilde{X}$ (\cite{Fu}, Lemma 
2.4 point (2)). Recall that $\widetilde{X}$ being simply connected and in Fujiki class $\mathcal C$, this ${\rm 
Aut}_0(\widetilde{X})$ is a complex linear algebraic group acting meromorphically on $X$ \cite{Fu} (Lemma 1 and Corollary 
5.8). This means --- as in the proof of Theorem \ref{thue} (ii) --- that the action of ${\rm Aut}_0(\widetilde{X})$ on 
$\widetilde{X}$ extends to a meromorphic map defined on a rational manifold. In particular, the closure of any ${\rm 
Aut}_0(\widetilde{X})$-orbit is the meromorphic image of a rational manifold and hence it is Moishezon.

The smooth fiber $t^{-1}(t(x_0))$ of $t$ is a closed analytic subset in the closure of the ${\rm 
Aut}_0(\widetilde{X})$-orbit of $x_0$. Consequently, $t^{-1}(t(x_0))$ is also Moishezon.

It can be shown that the smooth fiber $t^{-1}(t(x_0))$ does not admit any nontrivial holomorphic one-form. Indeed, any 
holomorphic one-form $\beta$ on it would furnish a constant function when specialized on any of the vector fields $\{X_1, 
\,\cdots,\, X_{d}\}$. Since all those vector fields vanish at some point in $t^{-1}(t(x_0))$ (Proposition 1.3 in \cite{GW} 
and the proof of Proposition 6.9 in \cite{Fu}), it follows that $\beta\,=\, 0$.

Since $t^{-1}(t(x_0))$ does not admit any nontrivial holomorphic one-form,
the first Betti number of the smooth 
fiber of $t$ is trivial. Then the results of Campana \cite[Corollary 2]{Ca} and Fujiki (Proposition 2.5 in \cite{Fu2},
see also \cite{Fu3}) imply that $\widetilde{X}$ is Moishezon, and hence $X$ is Moishezon: a contradiction.
\end{proof}

\begin{theorem} \label{Cartan geom} Let $X$ be a compact complex simply connected manifold in the Fujiki class 
$\mathcal{C}$. If $X$ bears a holomorphic Cartan geometry of algebraic type, then $X$ is a complex
projective variety. \end{theorem}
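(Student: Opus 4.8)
The plan is to reduce the statement to Theorem \ref{dim less 2} together with the known Moishezon case. Suppose $X$ is compact, simply connected, in Fujiki class $\mathcal C$, and carries a holomorphic Cartan geometry $(P,\,\omega)$ of algebraic type with model $(G,\,H)$. Write $n = \dim_{\mathbb C} X$ and $n-d = a(X)$. A holomorphic Cartan geometry is in particular a holomorphic rigid geometric structure, and as noted at the end of Section \ref{section: geometric structures} the analogues of Theorem \ref{act alg red} and of Theorem \ref{dim less 2} hold for Cartan geometries of algebraic type (via Theorem 1.2 in \cite{D2} and the work of Melnick--Pecastaing \cite{Me, Pe}). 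So I would split into two cases according to whether $d>0$ or $d=0$.

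First, if $d > 0$, then the Cartan-geometry version of Theorem \ref{dim less 2} applies directly to $(X,\,P,\,\omega)$ and forces the fundamental group of $X$ to be infinite, contradicting simple connectedness. Hence this case is vacuous, and we must have $d = 0$, i.e.\ $X$ is Moishezon. Second, once $X$ is known to be Moishezon, I would invoke the result of Biswas--McKay (Corollary 2 in \cite{BM2}, already cited in the proof of Corollary \ref{second corollary}) stating that a Moishezon manifold carrying a holomorphic Cartan geometry is automatically a smooth complex projective variety. Combining the two steps yields the theorem.

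The main obstacle I anticipate is not in the logical skeleton above but in making sure the hypotheses line up cleanly. Concretely, one must check that the extension of Theorem \ref{dim less 2} to Cartan geometries of algebraic type really does cover the present situation with no additional assumption (e.g.\ no hidden requirement on $K_X$, which is \emph{not} assumed here, in contrast with Proposition \ref{dim less}); this is where the adaptation of Gromov-type orbit results to Cartan geometries, and in particular the linear-algebraicity of ${\rm Aut}_0(X)$ for simply connected class-$\mathcal C$ manifolds (Corollary 5.8 in \cite{Fu}), must be used exactly as in the proof of Theorem \ref{dim less 2}. Assuming that adaptation is in hand, the remaining step --- deducing projectivity of a Moishezon manifold with a holomorphic Cartan geometry from \cite{BM2} --- is immediate, so the proof is short.

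Finally, I would remark that the conclusion ``$X$ is a complex projective variety'' is consistent with, and in the simply connected class-$\mathcal C$ setting refines, the Borel--Remmert picture: the simply connected hypothesis rules out the torus factor, so one expects projective rational homogeneous geometry in the background, and Theorem \ref{Cartan geom} confirms the projectivity half of that expectation without assuming homogeneity.
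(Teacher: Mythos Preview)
Your proposal is correct and follows the same two-case argument as the paper: the Cartan-geometry analogue of Theorem \ref{dim less 2} (via Theorem 1.2 in \cite{D2} and \cite{Me, Pe}) rules out $d>0$ for a simply connected $X$, and then Corollary 2 in \cite{BM2} upgrades Moishezon to projective. One small caveat: a holomorphic Cartan geometry is not literally a special case of a rigid geometric structure in Gromov's sense, but since you correctly invoke the \emph{adaptation} of the method of Theorem \ref{dim less 2} to Cartan geometries rather than applying that theorem directly, this imprecision does not affect your argument.
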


\begin{proof}
By Corollary 2 in \cite{BM2}, a Moishezon manifold $X$ admitting a holomorphic Cartan geometry must be
a smooth complex projective manifolds.

We deal now with the case where the algebraic dimension of $X$ is strictly less than the complex dimension of $X$.
In view of Theorem 1.2 in \cite{D2}, the 
method used in the proof of Theorem \ref{dim less 2} works for holomorphic Cartan geometries of algebraic type,
showing that compact simply connected manifolds in class $\mathcal C$ which are not of maximal 
algebraic dimension (meaning not Moishezon) do not admit any holomorphic Cartan geometry of algebraic type.
\end{proof}

Let us deduce Corollary \ref{third corollary} from Theorem \ref{Cartan geom}.

\begin{proof}[{Proof of Corollary \ref{third corollary}}] Assume, by contradiction, that the 
fundamental group of $X$ is finite. Replacing $X$ by its universal cover endowed with the 
pull-back of the Cartan geometry, enables one to assume that $X$ is simply connected. Theorem 
\ref{Cartan geom} implies than that $X$ is a projective manifold. With our assumption on the 
first Chern class, $X$ is a projective Calabi-Yau manifold. But a projective Calabi-Yau 
manifold bearing a holomorphic Cartan geometry of algebraic type is covered by a compact 
complex torus \cite{BM1,D2}: a contradiction.
\end{proof}

\section*{Acknowledgements}

We are very grateful to the referee for pointing out an error in a previous 
version; that part is now removed. We are grateful to Fr\'ed\'eric Campana for very 
helpful comments. We also thank Henri Guenancia for very valuable discussions. The 
first-named author is supported by a J. C. Bose Fellowship. The second-named author wishes to 
thank T.I.F.R. Mumbai for its hospitality.

%%%%%%%%%%%%%%%%%%%%%%%%%%%%%%%%%%%%%%%%%%%%%%%%%%%%%%%%%%%%%%


\begin{thebibliography}{ZZZZZ}

\bibitem[Am]{Am} A. M. Amores, Vector fields of a finite type $G$-structure, 
\textit{Jour. Diff. Geom.} \textbf{14} (1980), 1--6.

\bibitem[At]{At} M. F. Atiyah, Complex analytic connections in fibre
bundles, \textit{Trans. Amer. Math. Soc.} \textbf{85} (1957), 181--207.

\bibitem[BHPV]{BHPV} W. Barth, K. Hulek, C. Peters and A. Van De Ven, 
\textit{Compact complex surfaces}, Second Edition, Springer-Verlag, Berlin, Heidelberg, 
2004.

\bibitem[Be]{Be} A. Beauville, Vari\'et\'es k\"ahleriennes dont la premi\`ere
classe de Chern est nulle, \textit{Jour. Diff. Geom.} \textbf{18} (1983), 755--782.

\bibitem[BDM]{BDM} I. Biswas, S. Dumitrescu and B. McKay, Holomorphic Cartan 
geometries on manifolds of algebraic dimension zero, \textit{Epijournal de G\'eom\'etrie Alg\'ebrique, EPIGA} \textbf{3} (2019).

\bibitem[BD]{BD} I. Biswas and S. Dumitrescu, Holomorphic Affine Connections on 
Non-K\"ahler manifolds, \textit{Internat. Jour. Math.} {\bf 27} (2016).

\bibitem[BM1]{BM1} I. Biswas and B. McKay, Holomorphic Cartan geometries and Calabi-Yau manifolds, 
\textit{J. Geom. Phys.} {\bf 60} (2010), 661--663.

\bibitem[BM2]{BM2} I. Biswas and B. McKay, Holomorphic Cartan geometries and rational 
curves, \textit{Complex Manifolds} \textbf{3} (2016), 145--168.

\bibitem[Bl]{Bl} A. Blanchard, Sur les vari\'et\'es analytiques complexes, {\it Ann. Sci. Ec. Norm. Sup.}
{\bf 73} (1956), 157--202.

\bibitem[Ca]{Ca} F. Campana, Reduction d'Albanese d'un morphisme propre et 
faiblement K\"ahlerien. II, \textit{Compositio Math.} \textbf{54} (1985), 399--416.

\bibitem[Cl]{Cl} B. Claudon, Positivit\'e du cotangent logarithmique et conjecture 
de Shafarevich-Viehweg (d'apr\`es Campana, Paun, Taji,$\cdots$), \textit{S\'eminaire 
Bourbaki}, \textbf{1105} (2015), 1--34.

\bibitem[DG]{DG} G. D'Ambra and M. Gromov, \textit{Lectures on transformations groups: geometry and dynamics}, Surveys in 
Differential Geometry, Cambridge MA, (1991).

\bibitem[Du1]{D1} S. Dumitrescu, Meromorphic almost rigid geometric structures, 
\textit{Geometry, Rigidity and Group Actions}, Editors Benson Farb and David Fisher, 
Chicago Lectures in Mathematics Series, 32--58.

\bibitem[Du2]{D2} S. Dumitrescu, Killing fields of holomorphic Cartan geometries, 
\textit{Monatsh. Math.} \textbf{161} (2010), 145--154.

\bibitem[Du3]{D3} S. Dumitrescu, Structures g\'eom\'etriques holomorphes sur les 
vari\'et\'es complexes compactes, \textit{Ann. Scient. Ec. Norm. Sup} \textbf{34} 
(2001), 557--571.

\bibitem[Du4]{D4} S. Dumitrescu, Homog\'en\'eit\'e locale pour les m\'etriques 
riemanniennes holomorphes en dimension $3$ \textit{Ann. Inst. Fourier} \textbf{57} 
(2007), 739--773.

\bibitem[Du5]{D5} S. Dumitrescu, Connexions affines et projectives sur les surfaces 
complexes compactes, \textit{Math. Zeit.} \textbf{264} (2010), 301--316.

\bibitem[Fu]{Fu} A. Fujiki, On automorphism group on K\"ahler manifolds, 
\textit{Invent. Math.} \textbf{44} (1978), 225--258.

\bibitem[Fu2]{Fu2} A. Fujiki, On the structure of compact manifolds in $\mathcal C$, 
\textit{Advances Studies in Pure Mathematics}, \textbf{1}, Algebraic Varieties and 
Analytic Varieties, (1983), 231--302.

\bibitem[Fu3]{Fu3} A. Fujiki, Relative algebraic reduction and relative albanese map 
for fiber space in $\mathcal C$, \textit{Publ. R.I.M.S., Kyoto Univ.}, \textbf{19} 
(1983), 207--236.

\bibitem[GW]{GW} C. Gellhaus and T. Wurzbacher, Holomorphic group actions with many 
compact orbits, \textit{Comment. Math. Helv.} \textbf{64} (1989), 639--649.

\bibitem[Gh]{Gh} E. Ghys, Feuilletages holomorphes de codimension un sur les espaces homog\`enes complexes, \textit{Ann. 
Fac. Sci. Math. Toulouse} \textbf{5} (1996), 493--519.

\bibitem[Gh2]{Gh2} E. Ghys, D\'eformations des structures complexes sur les espaces homog\`enes de $SL(2, \mathbb{C})$, 
\textit{J. Reine Angew. Math.} \textbf{468} (1995), 113--138.

\bibitem[Gr]{Gr} M. Gromov, Rigid Transfomations Groups, Editors D. Bernard and Y. Choquet 
Bruhat, \textit{G\'eom\'etrie Diff\'erentielle}, \textbf{33}, Hermann, (1988), 65--139.

\bibitem[Hof]{Hof} T. H\"ofer, Remarks on torus principal bundles, \textit{Jour. Math. Kyoto Univ.} \textbf{33}, (1993), 227--259.

\bibitem[Ho]{Ho} H. Holmann, Analytische penodische Str\"omungen auf kompakten komplexen R\"aumen, \textit{Comment. Math. 
Helv.} \textbf{52} (1977), 251--257.

\bibitem[IKO]{IKO} M. Inoue, S. Kobayashi and T. Ochiai, Holomorphic affine connections on compact complex surfaces, 
\textit{Jour. Fac. Sci. Univ. Tokyo} \textbf{27} (1980), 247--264.
 
\bibitem[Li]{Li} D. Lieberman, Compactness of the Chow scheme: applications to automorphisms and deformations of K\"ahler 
manifolds, \textit{Fonctions de Plusierus Variables Complexes III}, Lecture Notes in Math., \textbf{670}, (1978), Editor 
Fran\c cois Norguet, Springer-Verlag, Berlin, Heidelberg, 140--186.

\bibitem[Me]{Me} K. Melnick, A Frobenius theorem for Cartan geometries with applications, \textit{L'Enseign. Math.}
\textbf{57} (2011), 57--89.

\bibitem[Mo]{Mo} B. Moishezon, On $n$ dimensional compact varieties with $n$ independent 
meromorphic functions, \textit{Amer. Math. Soc. Transl.} \textbf{63} (1967), 51--77.

\bibitem[No]{No} K. Nomizu, On local and global existence of Killing vector fields, \textit{Ann. 
of Math.} \textbf{72} (1960), 105--120.

\bibitem[Pe]{Pe} V. Pecastaing, On two theorems about local automorphisms of geometric structures, \textit{Ann. Institut 
Fourier} \textbf{66} (2016), 175--208.

\bibitem[Ro]{Ro} M. Rosenlicht, Some basic theorems on algebraic groups, \textit{American Journ. Math.} \textbf{78} 
(1956), 401--443.

\bibitem[Sh]{Sh} R. Sharpe, {\it Differential Geometry: Cartan's generalization of Klein's Erlangen program}, New York: 
Springer, (1996).

\bibitem[To]{To} V. Tosatti, Non-K\"ahler Calabi-Yau manifolds, \textit{Contemp. Math.}
\textbf{644} (2015), 261--277.

\bibitem[Ue]{Ue} K. Ueno, {\it Classification theory of algebraic varieties and compact complet spaces}, Notes written in 
collaboration with P. Cherenack, Lecture Notes in Mathematics, Vol. 439, Springer-Verlag, Berlin-New York, 1975.

\bibitem[Va]{Va} J. Varouchas, K{\"a}hler spaces and proper open morphisms, 
\textit{Math. Ann.} \textbf{283} (1989), 13--52.

\bibitem[Wa]{Wa} H.-C. Wang, Complex Parallelisable manifolds, \textit{Proc. 
Amer. Math. Soc.} \textbf{5} (1954), 771--776.

\end{thebibliography}
\end{document}